\newcounter{sarrow}
\theoremstyle{plain}
\newtheorem{definition}{Definition} 
\newtheorem{theorem}[definition]{Theorem}
\newtheorem{lemma}[definition]{Lemma}
\newtheorem{conjecture}[definition]{Conjecture}
\newtheorem{question}[definition]{Question}
\theoremstyle{definition}
\newtheorem{example}[definition]{Example}
\newtheorem{remark}[definition]{Remark}
\newcommand{\Z}{\mathbb Z}
\newcommand{\Hom}{{\rm Hom}}
\newcommand{\Ext}{{\rm Ext}}
\newcommand{\Rmod}{R\text{-}{\rm Mod}}
\newcommand{\kker}[1]{\ensuremath{{{\rm ker}\left(#1\right)}}}
\newcommand{\coker}[1]{\ensuremath{{{\rm coker}\left(#1\right)}}}
\newcommand{\im}[1]{\ensuremath{{{\rm im}\left(#1\right)}}}
\begin{document}

\title[Combinatorial construction of complexes]{Combinatorial free chain complexes over quotient polynomial rings}

\author{Daniel Bravo}
\address{
Universidad Austral de Chile \\
Facultad de Ciencias \\
Instituto de Ciencias F\'isicas y Matem\'aticas \\
Valdivia, Regi\'on de los R\'ios. CHILE
}
\email[Daniel Bravo]{daniel.bravo@uach.cl}
%\urladdr{http://icfm.uach.cl/personas/daniel-bravo.php}

\subjclass[2010]{Primary 13D02; Secondary  13D05, 13D22, 13F55}

\keywords{free chain complex, quotient polynomial ring, quadratic monomial, combinatorial construction, procedure}

\date{\today}

\baselineskip=14pt

\begin{abstract}
We present a procedure that constructs, in a combinatorial manner, a chain complex of free modules over a polynomial ring in finitely many variables, modulo an ideal generated by quadratic monomials. Applying this procedure to two specific rings and one family of rings, we demonstrate that the resulting chain complex is indeed an exact chain complex and thus a free resolution. Utilizing this free resolution, we show that, for these rings, the injective dimension is infinite, as modules over itself. Finally, we propose the conjecture that this procedure always yields a free resolution. 
\end{abstract}

\maketitle

%%%%%%%%%%%%%%%%%%%%%%%%%%%%%%%%%%%%
%%%%%%%%%%%%%%%%%%%%%%%%%%%%%%%%%%%%
%%%%%%%%%%%%%%%%%%%%%%%%%%%%%%%%%%%%
%%%%%%%%%%%%%%%%%%%%%%%%%%%%%%%%%%%%

\section{Introduction}
\bigskip

Any ring, when considered as a module over itself, is a projective module;  in fact, it is a free module. However, it's not always an injective module, i.e., self-injective. An example of a well-known ring that is self-injective is the following quotient ring $k[x_1,\ldots,x_n]/(x_1^2,\ldots,x_n^2)$, where $k$ is a field. On the other hand, it is not hard to see that the quotient ring $k[x_1,\ldots,x_n]/I$, with $I=(\{x_ix_j\})$ for all $i,j \in \{1,2,\ldots,n\}$ is as far removed as possible from being an injective module over itself. By this, we mean that its injective dimension (as a module over itself) is infinite. Indeed, the notion of injective dimension of $R$-modules gives a measurement of how far a given module is from being an injective module. This dimension is an integer when a certain chain complex is bounded  and infinite  when it's not. For example, the first ring mentioned before has injective dimension $0$, while the other ring has infinite injective dimension. Although there are many rings with infinite o zero injective dimension, these two share something in common. Namely, both are polynomial rings in several variables modulo an ideal generated by quadratic monomials; we will call these rings as quadratic quotient polynomial ring.

Hence, we asked what would happen to the injective dimension if we were to change the collection of monomial quadratic generators of the ideal. Although the answer to this question regarding the injective dimension of these rings is still a work in progress, in the process of providing answers we came up with an interesting procedure to construct certain chain complexes of free modules. The way this procedure is defined is through a recursive use of diagrams that represent certain maps between free modules, which can be put together to form a larger chain complex. For some specific rings, we then show that these chain complexes are indeed exact and hence a free resolution of some module. However, in general, we still lack a general proof that for any such quadratic quotient polynomial ring the given procedure results in an exact chain complex, therefore leaving it as a conjecture for now. Furthermore, we believe that the result of this procedure can be used to produce (infinite) periodic chain complexes of (infinite) free modules. 

After reviewing the literature \cite{Abramov-Infinite-free}, \cite{Froberg}, \cite{Peeva}, \cite{Priddy}, we believe that this procedure to obtain chain complexes  
differs from those previously presented and hasn't been documented before. 

The main goal of this article is to show this procedure, prove that it produce a chain complex of free modules for any quadratic quotient polynomial rings and state the conjecture about the exactness of the resulting chain complex. We will further provide three examples of quadratic quotient polynomial rings which, through the described procedure, gives a free resolutions which will then be used to show that the injective dimensions of each example ring, when considered as a module over itself, is infinite.

For the rest of the paper, we let $k$ be a field and let $R=k[x_1,\ldots,x_n]/I$, with $I=(q_1,q_2,\ldots,q_m)$ and each $q_l=x_ix_j$, with not necessarily different $i,j \in \{1,2,\ldots,n\}$. Since we will be working with $R$-modules and chain complexes of $R$-modules, we state that we will use the downward integral grading of chain complexes. That is a collection of modules $M_i$, with $i \in \Z$, and maps $d_i: M_i \rightarrow M_{i-1}$, also known as differentials, such that $d_{i-1} \circ d_i = 0$, for all $i \in \Z$. Usually, chain complexes, denoted by $M_{*}$, are pictured like this:
\begin{equation} \label{chain-complex-diagram}
\cdots \xrightarrow{d_{i+2}} M_{i+1} \xrightarrow{d_{i+1}} M_{i} \xrightarrow{d_{i}} M_{i-1} \xrightarrow{d_{i-1}} \cdots.
\end{equation}
but a downward picture of chain complexes will also be  used. 
Furthermore, for our purposes each module $M_i$ will be  a free module, that is $\oplus_{I} R$, for some index set $I$ and each $d_i$ is the map or matrix between these free modules.

\section{Chain complex construction procedure} \label{Chain-complex-procedure}
In what follows, we explain how to obtain diagrams from each generator of the ideal and use these diagrams to (recursively) produce larger diagrams that represent a resulting in a sequence of free modules with respective (differential) maps as in Diagram \eqref{chain-complex-diagram}. Although we will not show that this resulting sequence is a chain complex until Section \ref{complejo-exacto}, we still refer to the procedure as the ``Chain Complex Construction Procedure''.
\subsection{Representation rules.} \label{simplification} 
The following conventions are used to explain how to transit between chain complexes and diagrams. It also serves to simplify the description of the procedure and final construction of the corresponding diagram. Although they can be reversed at any time, we keep the diagrammatic form. The representation rules are:
\begin{enumerate}
\item Each copy of $R$ is represented by the symbol $\bullet$; and vice versa (occasionally we may refer to this symbol as vertex). 
\item Each direct sum of $R$ is represented by consecutive horizontal juxtaposition of the symbol $\bullet$; and vice versa. 
\item Each map $R \xrightarrow{x_i} R$, from $R$ to $R$ given by multiplication by $x_i$, is represented by the diagram $\bullet \xrightarrow{i} \bullet$; and vice versa.  
\item Each map $R \oplus R \to R$, given by $(r_1,r_2) \mapsto x_ir_1 + x_jr_2$ is represented, in its downward form, by the diagram:
\[
\begin{tikzpicture}[decoration=snake]
\node (Bs) at (1.75,0) {
 \begin{tikzpicture}[yscale=0.9,baseline=(current  bounding  box.center)]
\node (b0) at (0,4) {$\bullet$};
\node (b1) at (1,4) {$\bullet$};
\node (c0) at (0.5,3) {$\bullet$};
\draw [->] (b0) -- node[left]{\text{\tiny $i$}} (c0);
\draw [->] (b1) -- node[right]{\text{\tiny $j$}} (c0);
\end{tikzpicture}
};
\end{tikzpicture}
\]
\item Each map $R \to R \oplus R$, given by $r \mapsto (x_ir,  x_jr)$ is represented, in its downward form, by the diagram:
\[
\begin{tikzpicture}[decoration=snake]
\node (Bs) at (1.75,0) {
 \begin{tikzpicture}[yscale=0.9,baseline=(current  bounding  box.center)]
\node (b0) at (0,-4) {$\bullet$};
\node (b1) at (1,-4) {$\bullet$};
\node (c0) at (0.5,-3) {$\bullet$};
\draw [->] (c0) -- node[left]{\text{\tiny $i$}} (b0);
\draw [->] (c0) -- node[right]{\text{\tiny $j$}} (b1);
\end{tikzpicture}
};
\end{tikzpicture}
\]
\end{enumerate}

Furthermore, these diagrams also carry the degrees of the chain complex. That is, if the $R$-module $\bigoplus_{I} R$ is on degree $n$, then so is the respective $\bullet \cdots \bullet$ symbols index by $I$. 

Due to this representation between diagrams and maps, we use these concepts interchangeably. 

The Chain Complex Construction procedure is then composed of two main parts: the Diagram Construction Procedure and the Diagram of Free Modules. We explain these two next.

\subsection{Diagram Construction Procedure.} \label{diagram-procedure} This procedure is described in general, for any ring $R=k[x_1,\ldots,x_n]/I$, with $I=(q_1,q_2,\ldots,q_m)$ and each $q_l=x_ix_j$, with not necessarily different $i,j \in \{1,2,\ldots,n\}$. The procedure is described in four  steps.

\begin{enumerate}[label={\bfseries Step \arabic*:}]
\item \textbf{Diagram collection.} Collect all the following  diagrams in a set $D$ for each  $q_l \in I$ and all $x_sx_t \not \in I$, where $s\neq t$.

\begin{enumerate}
\item If $q_l=x_ix_j \in I$, with $i\neq j$, then  the two following ``column'' diagrams are in $D$:
\begin{equation}
\tag{D1}
 \label{i-j-diferente}
\begin{tikzpicture}[yscale=0.9,baseline=(current  bounding  box.center)]
\node (A) at (-2,0) {
\begin{tikzpicture}[yscale=0.9]
\node (a1) at (0,5) {$\bullet$};
\node (b0) at (0,4) {$\bullet$};
%\node (b1) at (1,4) {$\bullet$};
%
\node (c0) at (0,3) {$\bullet$};
\draw [->] (a1) -- node[left]{\text{\tiny $j$}} (b0);
\draw [->] (b0) -- node[left]{\text{\tiny $i$}} (c0);
\end{tikzpicture}
};
\node (T) at (0,0) {\text{and}};
\node (B) at (2,0) {
\begin{tikzpicture}[yscale=0.9]
\node (a1) at (0,5) {$\bullet$};
\node (b0) at (0,4) {$\bullet$};
\node (c0) at (0,3) {$\bullet$};
\node (p) at (0.5,3) {.};
\draw [->] (a1) -- node[left]{\text{\tiny $i$}} (b0);
\draw [->] (b0) -- node[left]{\text{\tiny $j$}} (c0);
\end{tikzpicture}
};

\end{tikzpicture}
\end{equation}
\item If $q_l = x_i^2 \in I$, then  the following ``repeated column'' diagram is in $D$:
\begin{equation} 
\tag{D2}
\label{i-j-igual}
\begin{tikzpicture}[yscale=0.9,baseline=(current  bounding  box.center)]
\node (a1) at (0,5) {$\bullet$};
\node (b0) at (0,4) {$\bullet$};
\node (c0) at (0,3) {$\bullet$};
\node (p) at (0.5,3) {.};
\draw [->] (a1) -- node[left]{\text{\tiny $i$}} (b0);
\draw [->] (b0) -- node[left]{\text{\tiny $i$}} (c0);
\end{tikzpicture}
\end{equation}
\item If the monomial $q_l=x_ix_j \not \in I$, then  the following ``diamond'' diagram (or its symmetric version, i.e. exchanging $i$ with $j$)  is in $D$:
\begin{equation} 
\tag{D3}
\label{i-j-no-esta}
\begin{tikzpicture}[yscale=0.9,baseline=(current  bounding  box.center)]
\node (a1) at (0.5,5) {$\bullet$};
\node (b0) at (0,4) {$\bullet$};
\node (b1) at (1,4) {$\bullet$};
\node (c0) at (0.5,3) {$\bullet$};
\node (p) at (1,3) {.};
\draw [->] (a1) -- node[left]{\text{\tiny $j$}} (b0);
\draw [->] (a1) -- node[right]{\text{\tiny $-i$}} (b1);
\draw [->] (b0) -- node[left]{\text{\tiny $i$}} (c0);
\draw [->] (b1) -- node[right]{\text{\tiny $j$}} (c0);

\end{tikzpicture}
\end{equation}
Here the convention is that the smallest index on top carries the sign. However, the sign can be also be added on any other arrow. Indeed, all we need one is one arrow with a sign different from the others in the diamond diagram. 
\item No diagram is associated to any $x_i^2 \not \in I$.
\end{enumerate}

\item \textbf{Initial map.} Choose a multiplication  map of the form $R \xrightarrow{x_i} R$, where $x_i$  appears as a factor of some $q_l \in I$. Represent this as the (downward) diagram $\begin{tikzpicture}[yscale=0.75,baseline=(current  bounding  box.center)]
\node (b0) at (0,4) {$\bullet$};
\node (c0) at (0,3) {$\bullet$};
\draw [->] (b0) -- node[left]{\text{\tiny $i$}} (c0);
\end{tikzpicture}$. 

This corresponds to a degree lowering map from $1$ to $0$ (degree).

\item \label{paste-rules} \textbf{Diagram completion rules.} Given $n\geq 0$ and a differential map from degree $n+1$ to degree $n$,   apply the following  ``diagram completion'' rules to it to produce a map from degree $n+2$ to degree $n+1$; a combination of these rules will produce the new differential map.

\begin{enumerate}
\item \textbf{Column completion.} If the diagram  $\begin{tikzpicture}[yscale=0.75,baseline=(current  bounding  box.center)]
\node (b0) at (0,4) {$\bullet$};
\node (c0) at (0,3) {$\bullet$};
\draw [->] (b0) -- node[left]{\text{\tiny $i$}} (c0);
\end{tikzpicture}$ appears in degree $n$ and $n+1$, then we add a new map from degree $n+2$ to degree $n+1$, by completing this diagram with the respective ``column'' diagram from the set $D$ and thus forming the diagram \eqref{i-j-diferente} or \eqref{i-j-igual}.
\item \textbf{Diamond completion.} If the figure \begin{tikzpicture}[yscale=0.75,baseline=(current  bounding  box.center)]
\node (b0) at (0,4) {$\bullet$};
\node (b1) at (1,4) {$\bullet$};
\node (c0) at (0.5,3) {$\bullet$};
\draw [->] (b0) -- node[left]{\text{\tiny $i$}} (c0);
\draw [->] (b1) -- node[right]{\text{\tiny $j$}} (c0);
\end{tikzpicture}
appears in degree $n$ and $n+1$, then we add a new map from degree $n+2$ to degree $n+1$, by completing this diagram with the respective ``diamond'' diagram from $D$ and thus forming the respective diagram \eqref{i-j-no-esta}. If there is already an edge with a minus sign, then that sign is kept and complete the diamond with no signed edges.

\item \label{no-rep} \textbf{No repetitions.} There are no repetitions of maps of the same index label into a shared $\bullet$ when completing with diagrams from $D$. That is, if two or more  diagram have to be added to the new degree so that the new added diagrams share the same index label, then these diagrams are merged, along the shared arrow. This merged can applied to column diagrams, diamonds diagrams or mixed versions of these.  
\end{enumerate}
\item \label{iteration} \textbf{Iteration.} Repeat step \ref{paste-rules}, but now starting with the new map from degree $n+2$ to $n+1$, recently produced from the given map from degrees $n+1$ to $n$.
\end{enumerate}
Due to the nature of this procedure, the construction process continues indefinitely producing an infinite graded diagram with marked arrows (which can also be viewed as an edge-labeled directed graph). This completes the diagram construction procedure.
\begin{remark}
On several examples, only a few levels need to be added to observe the regular patterns produced by this chain complex construction procedure. However, there are other examples where it takes a while to observe these patterns.
\end{remark}

\subsection{Diagram of Free Modules.} \label{diagram-procedure} Given a graded (downward) diagram with degree lowering marked arrows (as the one obtained in the Diagram Construction Procedure) we can translate it into a sequence of modules and maps as in \eqref{chain-complex-diagram}, by applying the representation rules described previously in \ref{simplification} backwards. The degree-wise collection of all the $\bullet$ symbols gives  a free $R$-module of the respective degree. Since each arrow is  a homothety map from $R$ to $R$ by some $x_i \in R$, then the degree-wise collection of all these homothety maps gives the (differential) map between the free modules (which can be written in matrix notation). This completes the procedure. 

We will show in Theorem \ref{chain-complex-prop} that this sequence of modules and maps is a chain complex, but before that we provide three examples below to illustrate how the Diagram Construction Procedure works.

\section{Examples over three rings.} We illustrate the procedure in \ref{diagram-procedure} for three different quadratic quotient polynomial rings. All the resulting diagrams will be given for each example, but only the first example will be given in full detail. 
\begin{example} \label{ejemplo-2-cubo}
Let $R=k[x_1,x_2,x_3]/(x_1x_2,x_1x_3)$, and note that $x_2x_3 \not \in I=(x_1x_2,x_1x_3)$. For simplicity, we let red represent the map multiplication by $x_2$, green represent the map multiplication by $x_3$ and blue represent the map multiplication by $x_1$. In what follows, the dash in the arrows implies that there is a negative sign to consider when viewed as a multiplication map. That is 
\[
\begin{tikzpicture}[yscale=0.9]
\node (a1) at (0,0) {$\bullet$};
\node (b0) at (1,0) {$\bullet$};
\draw [Red,->,thick] (a1) -- node[above]{\tiny \bf -2} (b0);
\end{tikzpicture}
\quad
\text{or}
\quad
\begin{tikzpicture}[yscale=0.9]
\node (a1) at (0,0) {$\bullet$};
\node (b0) at (1,0) {$\bullet$};
\draw [Red,->,thick] (a1) -- node[red]{{\tiny \bf $/$}} (b0);
\end{tikzpicture}
\]
represents the map  from $R$ to $R$ given by multiplication by $-x_2$.

\medskip
\noindent \textbf{Step 1.}
Since $x_1x_2$ and $x_1x_3$ are members of $I$ and that $x_2x_3$ is not, we get the following  diagrams:
\[
D = \left\{ 
\begin{tikzpicture}[yscale=0.9,baseline=(current  bounding  box.center)]
\node (a1) at (0,5) {$\bullet$};
\node (b0) at (0,4) {$\bullet$};
\node (c0) at (0,3) {$\bullet$};
\draw [Blue,->,thick] (a1) -- node[left]{\tiny 1} (b0);
\draw [Red,thick,->] (b0) -- node[left]{\tiny 2} (c0);
\end{tikzpicture}, \qquad
\begin{tikzpicture}[yscale=0.9,baseline=(current  bounding  box.center)]
\node (a1) at (0,5) {$\bullet$};
\node (b0) at (0,4) {$\bullet$};
\node (c0) at (0,3) {$\bullet$};
\draw [Red,thick,->] (a1) -- node[left]{\tiny 2} (b0);
\draw [Blue,->,thick] (b0) -- node[left]{\tiny 1} (c0);
\end{tikzpicture}, \qquad
\begin{tikzpicture}[yscale=0.9,baseline=(current  bounding  box.center)]
\node (a1) at (0,5) {$\bullet$};
\node (b0) at (0,4) {$\bullet$};
\node (c0) at (0,3) {$\bullet$};
\draw [Blue,->,thick] (a1) -- node[left]{\tiny 1} (b0);
\draw [Green,thick,->] (b0) -- node[left]{\tiny 3} (c0);
\end{tikzpicture}, \qquad
\begin{tikzpicture}[yscale=0.9,baseline=(current  bounding  box.center)]
\node (a1) at (0,5) {$\bullet$};
\node (b0) at (0,4) {$\bullet$};
\node (c0) at (0,3) {$\bullet$};
\draw [Green,thick,->] (a1) -- node[left]{\tiny 3} (b0);
\draw [Blue,->,thick] (b0) -- node[left]{\tiny 1} (c0);
\end{tikzpicture}, \qquad
\begin{tikzpicture}[yscale=0.9,baseline=(current  bounding  box.center)]
\node (a1) at (0.5,5) {$\bullet$};
\node (b0) at (0,4) {$\bullet$};
\node (b1) at (1,4) {$\bullet$};
\node (c0) at (0.5,3) {$\bullet$};
\draw [Green,thick,->] (a1) -- node[left]{\tiny 3} (b0);
\draw [Red,thick,->] (a1) -- node[right]{\tiny -2} (b1);
\draw [Red,thick,->] (b0) -- node[left]{\tiny 2} (c0); 
\draw [Green,thick,->] (b1) -- node[right]{\tiny 3} (c0);

\end{tikzpicture}
\right\}
\]

\medskip
\noindent \textbf{Step 2.} Choose to start with the multiplication by $x_1$ map since $x_1x_2 \in I$. This is downward represented as:
\[
\begin{tikzpicture}[yscale=0.9,baseline=(current  bounding  box.center)]
\node (a1) at (0,5) {$\bullet$};
\node (b0) at (0,4) {$\bullet$};
\node (d1) at (1.5,5) {\tiny degree $n=1$};
\node (d0) at (1.5,4) {\tiny degree $n=0$};
\draw [Blue,->,thick] (a1) -- node[left]{\tiny 1} (b0);
\end{tikzpicture}
\]

\medskip
\noindent \textbf{Step 3.}
Create a new map, by applying the completion rules to this map with the diagrams of the set $D$. We see that there are two columns diagrams that can complete the initial  diagram, thus producing a new map from degree $2$ to degree $1$. This is illustrated as follows:
\[
\begin{tikzpicture}[decoration=snake,baseline=(current  bounding  box.center)]
\node (B1) at (-2,0) 
{
% column 1
\begin{tikzpicture}[yscale=0.9,baseline=(current  bounding  box.center)]
\node (a1) at (0,5) {$\bullet$};
\node (b0) at (0,4) {$\bullet$};
\node (c0) at (0,3) {$\bullet$};
\draw [Red,->] (a1) -- node[left]{\tiny 2} (b0);
\draw [Blue,->,thick] (b0) -- node[left]{\tiny 1} (c0);
\end{tikzpicture}
};
\node (B2) at (2,0) 
{
% column 2
\begin{tikzpicture}[yscale=0.9,baseline=(current  bounding  box.center)]
\node (a1) at (0,5) {$\bullet$};
\node (b0) at (0,4) {$\bullet$};
\node (c0) at (0,3) {$\bullet$};
\draw [Green,->] (a1) -- node[left]{\tiny 3} (b0);
\draw [Blue,->,thick] (b0) -- node[left]{\tiny 1} (c0);
\end{tikzpicture}
};
\node (A) at (0,-2) 
{
% seed
\begin{tikzpicture}[yscale=0.9,baseline=(current  bounding  box.center)]
\node (a1) at (0,5) {$\bullet$};
\node (b0) at (0,4) {$\bullet$};
\draw [Blue,->,thick] (a1) -- node[left]{\tiny 1} (b0);
\end{tikzpicture}
};
\node (E) at (3,-1) {=};
\node (R) at (5.5,-1)
{
\begin{tikzpicture}[xscale=1,yscale=0.9,rotate=0,baseline=(current  bounding  box.center)]
\node (10) at (0,1) {$\bullet$};
\node(20) at (0,2) {$\bullet$};
\node (21) at (2,2){\tiny degree $n=1$}; 
\node (31) at (-1,3) {$\bullet$};
\node (32) at (1,3) {$\bullet$};
\node (34) at (2, 3){\tiny degree $n=2$}; 
\draw[Blue, ->] (20) -- (10);
\draw[Red, ->](31) -- (20);
\draw[Green, ->] (32) -- (20);
\end{tikzpicture}
};

%%% connecting arrows
\draw[|->,decorate,color={black}] (B1) -- (A);
\draw[|->,decorate,color={black}] (B2) -- (A);
\end{tikzpicture}
\]

\medskip
\noindent \textbf{Step 4.} We continue by completing the maps from degree 2 to degree 1, with the diagrams from $D$. We now drop the labels on the arrows and just follow the color coding. 

There are now 3 diagrams to add, two columns and one diamond, as shown in the following picture:
\[
\begin{tikzpicture}[yscale=0.75,decoration=snake]
\node (B1) at (-2,1) {
% column 
\begin{tikzpicture}[yscale=0.9,baseline=(current  bounding  box.center)]
\node (a1) at (0,5) {$\bullet$};
\node (b0) at (0,4) {$\bullet$};
\node (c0) at (0,3) {$\bullet$};
\draw [Blue,->] (a1) --  (b0);
\draw [Red,thick,->] (b0) --  (c0);
\end{tikzpicture}
};
\node (B2) at (0,1) {
% diamond
\begin{tikzpicture}[yscale=0.9,baseline=(current  bounding  box.center)]
\node (a1) at (0.5,5) {$\bullet$};
\node (b0) at (0,4) {$\bullet$};
\node (b1) at (1,4) {$\bullet$};
\node (c0) at (0.5,3) {$\bullet$};
%
%
%\node (sb0) at (0+0.5,4) {$\oplus$};
%
%
%
\draw [Green,->] (a1) --  (b0);
\draw [Red,->] (a1) -- node[red]{{\tiny  $/$}} (b1);
\draw [Red,thick,->] (b0) --  (c0); 
\draw [Green,thick,->] (b1) --  (c0);

\end{tikzpicture}
};
\node (B3) at (2,1) {
% column 
\begin{tikzpicture}[yscale=0.9,baseline=(current  bounding  box.center)]
\node (a1) at (0,5) {$\bullet$};
\node (b0) at (0,4) {$\bullet$};
\node (c0) at (0,3) {$\bullet$};
\draw [Blue,->] (a1) --  (b0);
\draw [Green,->,thick] (b0) --  (c0);
\end{tikzpicture}
};
\node (A) at (0,-3) {
% base
\begin{tikzpicture}[xscale=0.5,yscale=0.9,rotate=0,baseline=(current  bounding  box.center)]
\node (10) at (0,1) {$\bullet$};
\node(20) at (0,2) {$\bullet$};
\node (31) at (-1,3) {$\bullet$};
\node (32) at (1,3) {$\bullet$};
\draw[Blue, ->] (20) -- (10);
\draw[Red, ->,thick](31) -- (20);
\draw[Green, ->,thick] (32) -- (20);
\end{tikzpicture}
};
%
% connecting arrows
%
\draw[|->,decorate,color={black}] (B1) -- (A);
\draw[|->,decorate,color={black}] (B2) -- (A);
\draw[|->,decorate,color={black}] (B3) -- (A);
%
% result
%
\node (E) at (3,-1) {=};
\node (R) at (5.5,-1)
{
\begin{tikzpicture}[xscale=0.5,yscale=0.9,rotate=0,baseline=(current  bounding  box.center)]
\node (10) at (0,1) {$\bullet$};
\node(20) at (0,2) {$\bullet$};
\node (31) at (-1,3) {$\bullet$};
\node (32) at (1,3) {$\bullet$};
\node (34) at (5, 3){\tiny degree $n=2$}; 
\node (41) at (-3,4) {$\bullet$};
\node (42) at (0,4) {$\bullet$};
\node (43) at (3,4) {$\bullet$};
\node (46) at (5, 4){\tiny degree $n=3$}; 
\draw[Blue, ->] (20) -- (10);
\draw[Red, ->](31) -- (20);
\draw[Green, ->] (32) -- (20);
\draw[Blue, ->] (41) -- (31);
\draw[Green, ->] (42) -- (31);
\draw[Red, ->] (42) -- node[red]{{\tiny \bf $/$}} (32);
\draw[Blue, ->] (43) -- (32);
\end{tikzpicture}
};
\end{tikzpicture}
\]
These diagrams are added at the corresponding highlighted arrows, thus producing a map from degree $3$ to degree $2$. We illustrate one more iteration of the completion process to produce a map from degree $4$ to degree $3$. At this stage, there are 6 diagrams to add, but one pair of them will follow the ``No repetitions'' rules from \ref{no-rep}. Hence, only five  $\bullet$ symbols will appear in degree $n=5$.
\[
\begin{tikzpicture}[yscale=0.75,decoration=snake]
\node (B1) at (-2.5,3) {
% Columna roja-verde
\begin{tikzpicture}[yscale=0.9,baseline=(current  bounding  box.center)]
\node (a1) at (0,5) {$\bullet$};
\node (b0) at (0,4) {$\bullet$};
\node (c0) at (0,3) {$\bullet$};
\draw [Red,->] (a1) --  (b0);
\draw [Blue,->,thick] (b0) --  (c0);
\end{tikzpicture}
};
\node (B2) at (-1.5,3) {
% Columna azul-verde
\begin{tikzpicture}[yscale=0.9,baseline=(current  bounding  box.center)]
\node (a1) at (0,5) {$\bullet$};
\node (b0) at (0,4) {$\bullet$};
\node (c0) at (0,3) {$\bullet$};
\draw [Green,->] (a1) --  (b0);
\draw [Blue,->,thick] (b0) --  (c0);
\end{tikzpicture}
};
\node (B3) at (-0.5,3) {
% Columna verde-azul
\begin{tikzpicture}[yscale=0.9,baseline=(current  bounding  box.center)]
\node (a1) at (0,5) {$\bullet$};
\node (b0) at (0,4) {$\bullet$};
\node (c0) at (0,3) {$\bullet$};
\draw [Blue,->] (a1) --  (b0);
\draw [Green,->,thick] (b0) --  (c0);
\end{tikzpicture}
};
\node (B4) at (0.5,3) {
% Columna verde-roja
\begin{tikzpicture}[yscale=0.9,baseline=(current  bounding  box.center)]
\node (a1) at (0,5) {$\bullet$};
\node (b0) at (0,4) {$\bullet$};
\node (c0) at (0,3) {$\bullet$};
\draw [Blue,->] (a1) --  (b0);
\draw [Red,thick,->] (b0) --  (c0);
\end{tikzpicture}
};
\node (B5) at (1.5,3) {
% Columna roja-verde
\begin{tikzpicture}[yscale=0.9,baseline=(current  bounding  box.center)]
\node (a1) at (0,5) {$\bullet$};
\node (b0) at (0,4) {$\bullet$};
\node (c0) at (0,3) {$\bullet$};
\draw [Red,->] (a1) --  (b0);
\draw [Blue,->,thick] (b0) --  (c0);
\end{tikzpicture}
};
\node (B6) at (2.5,3) {
% Columna azul-verde
\begin{tikzpicture}[yscale=0.9,baseline=(current  bounding  box.center)]
\node (a1) at (0,5) {$\bullet$};
\node (b0) at (0,4) {$\bullet$};
\node (c0) at (0,3) {$\bullet$};
\draw [Green,->] (a1) --  (b0);
\draw [Blue,->,thick] (b0) --  (c0);
\end{tikzpicture}
};
\node (A) at (0,-2.5) {
% Diagrama anterior
\begin{tikzpicture}[xscale=0.65,yscale=0.9,rotate=0,baseline=(current  bounding  box.center)]
\node (10) at (0,1) {$\bullet$};
\node(20) at (0,2) {$\bullet$};
\node (31) at (-1,3) {$\bullet$};
\node (32) at (1,3) {$\bullet$};
\node (41) at (-3,4) {$\bullet$};
\node (42) at (0,4) {$\bullet$};
\node (43) at (3,4) {$\bullet$};
\draw[Blue, ->] (20) -- (10);
\draw[Red, ->](31) -- (20);
\draw[Green, ->] (32) -- (20);
\draw[Blue,thick, ->] (41) -- (31);
\draw[Green,thick, ->] (42) -- (31);
\draw[Red,thick, ->] (42) -- node[red]{{\tiny \bf $/$}} (32);
\draw[Blue,thick, ->] (43) -- (32);
\end{tikzpicture}
};
\node (E) at (3.5,0) {=};
\node (R) at (7,0) {
% Resultado
\begin{tikzpicture}[xscale=0.5,yscale=0.9,rotate=0,baseline=(current  bounding  box.center)]
%\node (-10) at (0,-1) {0};
%
%\node (00) at (0,0) {$M$};
%
\node (10) at (0,1) {$\bullet$};
%\node (11) at (0.5,0.5){$d_0$};
%
\node(20) at (0,2) {$\bullet$};
%\node (21) at (0.5,1.5){$d_1$}; 
%
\node (31) at (-1,3) {$\bullet$};
\node (32) at (1,3) {$\bullet$};
%\node (34) at (1, 2.5){$d_2$}; 
%
\node (41) at (-3,4) {$\bullet$};
\node (42) at (0,4) {$\bullet$};
\node (43) at (3,4) {$\bullet$};
\node (46) at (6, 4){\tiny degree $n=3$}; ; 
\node (51) at (-4,5) {$\bullet$};
\node (52) at (-2,5) {$\bullet$};
\node (53) at (0,5) {$\bullet$};
\node (54) at (2,5) {$\bullet$};
\node (55) at (4,5) {$\bullet$};
\node (510) at (6, 5){\tiny degree $n=4$};  
\draw[Blue, ->] (20) -- (10);
\draw[Red, ->](31) -- (20);
\draw[Green, ->] (32) -- (20);
\draw[Blue, ->] (41) -- (31);
\draw[Green, ->] (42) -- (31);
\draw[Red, ->] (42) -- node[red]{{\tiny \bf $/$}} (32);
\draw[Blue, ->] (43) -- (32);
\draw[Red, ->] (51) -- (41);
\draw[Green, ->] (52) -- (41);
\draw[Blue, ->] (53) -- (42);
\draw[Red, ->] (54) --  (43);
\draw[Green, ->] (55) -- (43);
\end{tikzpicture}
};
%
% Conecciones
%
\draw[|->,decorate,color={black}] (-2.5,1.25) -- (-1.6,-0.5);
\draw[|->,decorate,color={black}] (-1.5,1.25) -- (-1.4,-0.5);
\draw[|->,decorate,color={black}] (B3) -- (A);
\draw[|->,decorate,color={black}] (B4) -- (A);
\draw[|->,decorate,color={black}] (1.5,1.25) -- (1.4,-0.5);
\draw[|->,decorate,color={black}] (2.5,1.25) -- (1.6,-0.5);
\end{tikzpicture}
\]
Continuing with the procedure (that is \textbf{Step 4}), we obtain the following diagram:
\[
\begin{tikzpicture}[xscale=0.5,yscale=0.65,rotate=0]
\node (10) at (12,1) {$\bullet$};
\node(20) at (12,2) {$\bullet$};
\node (31) at (10,3) {$\bullet$};
\node (32) at (14,3) {$\bullet$};
\node (41) at (4,4) {$\bullet$};
\node (42) at (12,4) {$\bullet$};
\node (43) at (20,4) {$\bullet$};
\node (51) at (2.5,5) {$\bullet$};
\node (52) at (5.5,5) {$\bullet$};
\node (53) at (12,5) {$\bullet$};
\node (54) at (18.5,5) {$\bullet$};
\node (55) at (21.5,5) {$\bullet$};
\node (61) at (1,6) {$\bullet$};
\node (62) at (4,6) {$\bullet$};
\node (63) at (7,6) {$\bullet$};
\node (64) at (11,6) {$\bullet$};
\node (65) at (13,6) {$\bullet$};
\node (66) at (17,6) {$\bullet$};
\node (67) at (20,6) {$\bullet$};
\node (68) at (23,6) {$\bullet$};
\node (71) at (0,7) {$\bullet$};
\node (72) at (2,7) {$\bullet$};
\node (73) at (4,7) {$\bullet$};
\node (74) at (6,7) {$\bullet$};
\node (75) at (8,7) {$\bullet$};
\node (76) at (10,7) {$\bullet$};
\node (77) at (12,7) {$\bullet$};
\node (78) at (14,7) {$\bullet$};
\node (79) at (16,7) {$\bullet$};
\node (710) at (18,7) {$\bullet$};
\node (711) at (20,7) {$\bullet$};
\node (712) at (22,7) {$\bullet$};
\node (713) at (24,7) {$\bullet$};
\node (p) at (12,8) {$\vdots$};
\draw[Blue, ->] (20) -- (10);
\draw[Red, ->](31) -- (20);
\draw[Green, ->] (32) -- (20);
\draw[Blue, ->] (41) -- (31);
\draw[Green, ->] (42) -- (31);
\draw[Red, ->] (42) -- node[red]{{\tiny \bf $/$}} (32);
\draw[Blue, ->] (43) -- (32);
\draw[Red, ->] (51) -- (41);
\draw[Green, ->] (52) -- (41);
\draw[Blue, ->] (53) -- (42);
\draw[Red, ->] (54) --  (43);
\draw[Green, ->] (55) -- (43);
\draw[Blue, ->] (61) -- (51);
\draw[Green, ->] (62) -- (51);
\draw[Red, ->] (62) -- node[red]{{\scriptsize \bf $/$}} (52);
\draw[Blue, ->] (63) -- (52);
\draw[Red, ->] (64) -- (53);
\draw[Green, ->] (65) --  (53);
\draw[Blue, ->] (66) -- (54);
\draw[Green, ->] (67) -- (54);
\draw[Red, ->] (67) -- node[red]{{\scriptsize \bf $/$}} (55);
\draw[Blue, ->] (68) -- (55);
\draw[Red, ->] (71) -- (61);
\draw[Green, ->] (72) -- (61);
\draw[Blue, ->] (73) -- (62);
\draw[Red, ->] (74) -- (63);
\draw[Green, ->] (75) -- (63);
\draw[Blue, ->] (76) -- (64);
\draw[Green, ->] (77) --  (64);
\draw[Red, ->] (77) -- node[red]{{\scriptsize \bf $/$}} (65);
\draw[Blue, ->] (78) -- (65);
\draw[Red, ->] (79) -- (66);
\draw[Green, ->] (710) -- (66);
\draw[Blue, ->] (711) -- (67);
\draw[Red, ->] (712) -- (68);
\draw[Green, ->] (713) -- (68);
\end{tikzpicture}
\]

This procedure goes on infinitely and we can already see the repeating pattern. 

In Section \ref{2-cubo}, we will see that the resulting sequence of modules is a chain complex, denoted by $P_*$. In fact, it is a free resolution of the $R$-module $R/(x_1) = \coker{R \xrightarrow{x_1} R}$.
\end{example}

In the following two  examples, we show the set $D$ of  diagrams, a choice for initial map and the resulting diagram after applying the procedure. 

\begin{example} \label{anillo-modulo-todos-cuadraticos}
Let $R=k[x_1,x_2]/(x_1^2,x_1x_2,x_2^2)$, and note that there are no mixed product missing in the ideal. We continue to use the same color convention. For this ring we get  the following set  of  diagrams:
\[
D = \left\{ 
\begin{tikzpicture}[yscale=0.9,baseline=(current  bounding  box.center)]
\node (a1) at (0,5) {$\bullet$};
\node (b0) at (0,4) {$\bullet$};
\node (c0) at (0,3) {$\bullet$};
\draw [Blue,->] (a1) -- node[left]{\tiny 1} (b0);
\draw [Blue,->] (b0) -- node[left]{\tiny 1} (c0);
\end{tikzpicture}\; , \;
\begin{tikzpicture}[yscale=0.9,baseline=(current  bounding  box.center)]
\node (a1) at (0,5) {$\bullet$};
\node (b0) at (0,4) {$\bullet$};
\node (c0) at (0,3) {$\bullet$};
\draw [Blue,->] (a1) -- node[left]{\tiny 1} (b0);
\draw [Red,->] (b0) -- node[left]{\tiny 2} (c0);
\end{tikzpicture}\; , \;
\begin{tikzpicture}[yscale=0.9,baseline=(current  bounding  box.center)]
\node (a1) at (0,5) {$\bullet$};
\node (b0) at (0,4) {$\bullet$};
\node (c0) at (0,3) {$\bullet$};
\draw [Red,->] (a1) -- node[left]{\tiny 2} (b0);
\draw [Blue,->] (b0) -- node[left]{\tiny 1} (c0);
\end{tikzpicture}\; , \;
\begin{tikzpicture}[yscale=0.9,baseline=(current  bounding  box.center)]
\node (a1) at (0,5) {$\bullet$};
\node (b0) at (0,4) {$\bullet$};
\node (c0) at (0,3) {$\bullet$};
\draw [Red,->] (a1) -- node[left]{\tiny 2} (b0);
\draw [Red,->] (b0) -- node[left]{\tiny 2} (c0);
\end{tikzpicture}
\right\}
\]
Choose to start with the multiplication by $x_1$ map, and obtain the following (binary tree like) diagram.
\[
\begin{tikzpicture}[xscale=0.4,yscale=0.7,rotate=0]
\node(20) at (15,2) {$\bullet$};
\node (31) at (15,3) {$\bullet$};
\node (41) at (7,4) {$\bullet$};
\node (42) at (23,4) {$\bullet$};
\node (51) at (3,5) {$\bullet$};
\node (52) at (11,5) {$\bullet$};
\node (53) at (19,5) {$\bullet$};
\node (54) at (27,5) {$\bullet$};
\node (61) at (1,6) {$\bullet$};
\node (62) at (5,6) {$\bullet$};
\node (63) at (9,6) {$\bullet$};
\node (64) at (13,6) {$\bullet$};
\node (65) at (17,6) {$\bullet$};
\node (66) at (21,6) {$\bullet$};
\node (67) at (25,6) {$\bullet$};
\node (68) at (29,6) {$\bullet$};
\node (71) at (0,7) {$\bullet$};
\node (72) at (2,7) {$\bullet$};
\node (73) at (4,7) {$\bullet$};
\node (74) at (6,7) {$\bullet$};
\node (75) at (8,7) {$\bullet$};
\node (76) at (10,7) {$\bullet$};
\node (77) at (12,7) {$\bullet$};
\node (78) at (14,7) {$\bullet$};
\node (79) at (16,7) {$\bullet$};
\node (710) at (18,7) {$\bullet$};
\node (711) at (20,7) {$\bullet$};
\node (712) at (22,7) {$\bullet$};
\node (713) at (24,7) {$\bullet$};
\node (714) at (26,7) {$\bullet$};
\node (715) at (28,7) {$\bullet$};
\node (716) at (30,7) {$\bullet$};
\node (p) at (15,8) {$\vdots$};
\draw[Blue, ->](31) -- (20);
\draw[Blue, ->] (41) -- (31);
\draw[Red, ->] (42) -- (31);
\draw[Blue, ->] (51) -- (41);
\draw[Red, ->] (52) -- (41);
\draw[Blue, ->] (53) -- (42);
\draw[Red, ->] (54) --  (42);
\draw[Blue, ->] (61) -- (51);
\draw[Red, ->] (62) -- (51);
\draw[Blue, ->] (63) -- (52);
\draw[Red, ->] (64) -- (52);
\draw[Blue, ->] (65) --  (53);
\draw[Red, ->] (66) -- (53);
\draw[Blue, ->] (67) -- (54);
\draw[Red, ->] (68) --  (54);
\draw[Blue, ->] (71) -- (61);
\draw[Red, ->] (72) -- (61);
\draw[Blue, ->] (73) -- (62);
\draw[Red, ->] (74) -- (62);
\draw[Blue, ->] (75) -- (63);
\draw[Red, ->] (76) -- (63);
\draw[Blue, ->] (77) --  (64);
\draw[Red, ->] (78) -- (64);
\draw[Blue, ->] (79) -- (65);
\draw[Red, ->] (710) -- (65);
\draw[Blue, ->] (711) -- (66);
\draw[Red, ->] (712) -- (66);
\draw[Blue, ->] (713) -- (67);
\draw[Red, ->] (714) -- (67);
\draw[Blue, ->] (715) -- (68);
\draw[Red, ->] (716) -- (68);

\end{tikzpicture}
\]
We will denote the resulting chain complex by $V_*$ and show in Section \ref{modulo-todos-cuadraticos} that it is a free resolution of the $R$-module $R/(x_1)$.
\end{example}

\begin{example}\label{ejemplo-O}
The next example is a first example of family of rings. For some integer $n \geq 2$, we let $R=k[x_1,\ldots,x_n]/(x_1^2,x_ix_j)_{1\leq i \neq j \leq n}$. That is the ideal  formed by all mixed product and only the square of $x_1$. Hence there are no mixed product missing in the ideal and thus no diamond diagram will appear in $D$. 
For instance, if $n=4$, then the respective ideal is  \( (x_1^2,x_1x_2,x_1x_3,x_1x_4, x_2x_3,x_2x_4,x_4x_4).\)

Similarly, the resulting chain complex, denoted by $O(n)_*$ is a free resolution of the module $R/(x_1)$. This will be shown in Section \ref{teorema-ejemplo-O}. 

Here we use the color blue for multiplication by $x_1$ and black for all the other maps, but keeping the number on the edge to denote the corresponding multiplication map. 
For this ring we get  the following set of  diagrams:
\[
\begin{tikzpicture}
\node (A) at (0,0) {
$D =
\left\{ 
\begin{tikzpicture}[yscale=0.9,baseline=(current  bounding  box.center)]
\node (a1) at (0,5) {$\bullet$};
\node (b0) at (0,4) {$\bullet$};
\node (c0) at (0,3) {$\bullet$};
\draw [Blue,->] (a1) -- node[left]{\tiny 1} (b0);
\draw [Blue,->] (b0) -- node[left]{\tiny 1} (c0);
\end{tikzpicture}, 
\begin{tikzpicture}[yscale=0.9,baseline=(current  bounding  box.center)]
\node (a1) at (0,5) {$\bullet$};
\node (b0) at (0,4) {$\bullet$};
\node (c0) at (0,3) {$\bullet$};
\draw [Blue,->] (a1) -- node[left]{\tiny 1} (b0);
\draw [Black,->] (b0) -- node[left]{\tiny 2} (c0);
\end{tikzpicture}, 
\begin{tikzpicture}[yscale=0.9,baseline=(current  bounding  box.center)]
\node (a1) at (0,5) {$\bullet$};
\node (b0) at (0,4) {$\bullet$};
\node (c0) at (0,3) {$\bullet$};
\draw [Black,->] (a1) -- node[left]{\tiny 2} (b0);
\draw [Blue,->] (b0) -- node[left]{\tiny 1} (c0);
\end{tikzpicture}, \ldots ,
\begin{tikzpicture}[yscale=0.9,baseline=(current  bounding  box.center)]
\node (a1) at (0,5) {$\bullet$};
\node (b0) at (0,4) {$\bullet$};
\node (c0) at (0,3) {$\bullet$};
\draw [Blue,->] (a1) -- node[left]{\tiny 1} (b0);
\draw [Black,->] (b0) -- node[left]{\tiny n} (c0);
\end{tikzpicture}, 
\begin{tikzpicture}[yscale=0.9,baseline=(current  bounding  box.center)]
\node (a1) at (0,5) {$\bullet$};
\node (b0) at (0,4) {$\bullet$};
\node (c0) at (0,3) {$\bullet$};
\draw [Black,->] (a1) -- node[left]{\tiny n} (b0);
\draw [Blue,->] (b0) -- node[left]{\tiny 1} (c0);
\end{tikzpicture},
\begin{tikzpicture}[yscale=0.9,baseline=(current  bounding  box.center)]
\node (a1) at (0,5) {$\bullet$};
\node (b0) at (0,4) {$\bullet$};
\node (c0) at (0,3) {$\bullet$};
\draw [Black,->] (a1) -- node[left]{\tiny 2} (b0);
\draw [Black,->] (b0) -- node[left]{\tiny 3} (c0);
\end{tikzpicture}, 
\begin{tikzpicture}[yscale=0.9,baseline=(current  bounding  box.center)]
\node (a1) at (0,5) {$\bullet$};
\node (b0) at (0,4) {$\bullet$};
\node (c0) at (0,3) {$\bullet$};
\draw [Black,->] (a1) -- node[left]{\tiny 3} (b0);
\draw [Black,->] (b0) -- node[left]{\tiny 2} (c0);
\end{tikzpicture}, 
\begin{tikzpicture}[yscale=0.9,baseline=(current  bounding  box.center)]
\node (a1) at (0,5) {$\bullet$};
\node (b0) at (0,4) {$\bullet$};
\node (c0) at (0,3) {$\bullet$};
\draw [Black,->] (a1) -- node[left]{\tiny 2} (b0);
\draw [Black,->] (b0) -- node[left]{\tiny 4} (c0);
\end{tikzpicture}, 
\begin{tikzpicture}[yscale=0.9,baseline=(current  bounding  box.center)]
\node (a1) at (0,5) {$\bullet$};
\node (b0) at (0,4) {$\bullet$};
\node (c0) at (0,3) {$\bullet$};
\draw [Black,->] (a1) -- node[left]{\tiny 4} (b0);
\draw [Black,->] (b0) -- node[left]{\tiny 2} (c0);
\end{tikzpicture}, 
\ldots,
\begin{tikzpicture}[yscale=0.9,baseline=(current  bounding  box.center)]
\node (a1) at (0,5) {$\bullet$};
\node (b0) at (0,4) {$\bullet$};
\node (c0) at (0,3) {$\bullet$};
\draw [Black,->] (a1) -- node[left]{\tiny 2} (b0);
\draw [Black,->] (b0) -- node[left]{\tiny n} (c0);
\end{tikzpicture}, 
\begin{tikzpicture}[yscale=0.9,baseline=(current  bounding  box.center)]
\node (a1) at (0,5) {$\bullet$};
\node (b0) at (0,4) {$\bullet$};
\node (c0) at (0,3) {$\bullet$};
\draw [Black,->] (a1) -- node[left]{\tiny n} (b0);
\draw [Black,->] (b0) -- node[left]{\tiny 2} (c0);
\end{tikzpicture}, 
\right.$
};
\node (B) at (3,-2.5) {
$
\left. 
\begin{tikzpicture}[yscale=0.9,baseline=(current  bounding  box.center)]
\node (a1) at (0,5) {$\bullet$};
\node (b0) at (0,4) {$\bullet$};
\node (c0) at (0,3) {$\bullet$};
\draw [Black,->] (a1) -- node[left]{\tiny 3} (b0);
\draw [Black,->] (b0) -- node[left]{\tiny 4} (c0);
\end{tikzpicture}, 
\begin{tikzpicture}[yscale=0.9,baseline=(current  bounding  box.center)]
\node (a1) at (0,5) {$\bullet$};
\node (b0) at (0,4) {$\bullet$};
\node (c0) at (0,3) {$\bullet$};
\draw [Black,->] (a1) -- node[left]{\tiny 4} (b0);
\draw [Black,->] (b0) -- node[left]{\tiny 3} (c0);
\end{tikzpicture}, 
\ldots,
\begin{tikzpicture}[yscale=0.9,baseline=(current  bounding  box.center)]
\node (a1) at (0,5) {$\bullet$};
\node (b0) at (0,4) {$\bullet$};
\node (c0) at (0,3) {$\bullet$};
\draw [Black,->] (a1) -- node[left]{\tiny 3} (b0);
\draw [Black,->] (b0) -- node[left]{\tiny n} (c0);
\end{tikzpicture}, 
\begin{tikzpicture}[yscale=0.9,baseline=(current  bounding  box.center)]
\node (a1) at (0,5) {$\bullet$};
\node (b0) at (0,4) {$\bullet$};
\node (c0) at (0,3) {$\bullet$};
\draw [Black,->] (a1) -- node[left]{\tiny n} (b0);
\draw [Black,->] (b0) -- node[left]{\tiny 3} (c0);
\end{tikzpicture}, 
\ldots,
\begin{tikzpicture}[yscale=0.9,baseline=(current  bounding  box.center)]
\node (a1) at (0,5) {$\bullet$};
\node (b0) at (0,4) {$\bullet$};
\node (c0) at (0,3) {$\bullet$};
\draw [Black,->] (a1) -- node[left]{\tiny n-1} (b0);
\draw [Black,->] (b0) -- node[left]{\tiny n} (c0);
\end{tikzpicture}, 
\begin{tikzpicture}[yscale=0.9,baseline=(current  bounding  box.center)]
\node (a1) at (0,5) {$\bullet$};
\node (b0) at (0,4) {$\bullet$};
\node (c0) at (0,3) {$\bullet$};
\draw [Black,->] (a1) -- node[left]{\tiny n} (b0);
\draw [Black,->] (b0) -- node[left]{\tiny n-1} (c0);
\end{tikzpicture}
\right\}$
};
\end{tikzpicture}
\]

\newpage
Again, start with the multiplication by $x_1$ map, and obtain the following diagram. This time we turn the diagram to the side.
\[
\begin{tikzpicture}[xscale=1.9,yscale=-0.28,rotate=90]
\def\s{0.5}
\def\r{90}
\node[scale=\s,rotate=\r] (61) at (30,6) {$\vdots$};
\node[scale=\s] (51) at (0,5) {$\bullet$};
\node[scale=\s] (52) at (1,5) {$\bullet$};
\node[scale=\s] (53) at (2,5) {$\bullet$};
\node[scale=\s,rotate=\r] (54) at (3,5) {$\cdots$};
\node[scale=\s] (55) at (4,5) {$\bullet$};
\node[scale=\s] (56) at (5,5) {};
\node[scale=\s] (57) at (6,5) {$\bullet$};
\node[scale=\s] (58) at (7,5) {$\bullet$};
\node[scale=\s] (59) at (8,5) {$\bullet$};
\node[scale=\s,rotate=\r] (510) at (9,5) {$\cdots$};
\node[scale=\s] (511) at (10,5) {$\bullet$};
\node[scale=\s] (512) at (11,5) {};
\node[scale=\s,rotate=\r] (513) at (12,5) {$\cdots$};
\node[scale=\s] (514) at (13,5) {};
\node[scale=\s] (515) at (14,5) {$\bullet$};
\node[scale=\s] (516) at (15,5) {$\bullet$};
\node[scale=\s] (517) at (16,5) {$\bullet$};
\node[scale=\s,rotate=\r] (518) at (17,5) {$\cdots$};
\node[scale=\s] (519) at (18,5) {$\bullet$};
\node[scale=\s] (520) at (19,5) {};
\node[scale=\s] (521) at (20,5) {$\bullet$};
\node[scale=\s] (522) at (21,5) {$\bullet$};
\node[scale=\s] (523) at (22,5) {$\bullet$};
\node[scale=\s,rotate=\r] (524) at (23,5) {$\cdots$};
\node[scale=\s] (525) at (24,5) {$\bullet$};
\node[scale=\s] (526) at (25,5) {};
\node[scale=\s] (527) at (26,5) {$\bullet$};
\node[scale=\s] (528) at (27,5) {$\bullet$};
\node[scale=\s] (529) at (28,5) {$\bullet$};
\node[scale=\s,rotate=\r] (530) at (29,5) {$\cdots$};
\node[scale=\s] (531) at (30,5) {$\bullet$};
\node[scale=\s] (532) at (31,5) {};
\node[scale=\s,rotate=\r] (533) at (32,5) {$\cdots$};
\node[scale=\s] (534) at (33,5) {};
\node[scale=\s] (535) at (34,5) {$\bullet$};
\node[scale=\s] (536) at (35,5) {$\bullet$};
\node[scale=\s] (537) at (36,5) {$\bullet$};
\node[scale=\s,rotate=\r] (538) at (37,5) {$\cdots$};
\node[scale=\s] (539) at (38,5) {$\bullet$};
\node[scale=\s] (540) at (39,5) {};
\node[scale=\s] (541) at (40,5) {};
\node[scale=\s,rotate=\r] (542) at (41,5) {$\cdots$};
\node[scale=\s] (543) at (42,5) {};
\node[scale=\s] (544) at (43,5) {};
\node[scale=\s] (545) at (44,5) {$\bullet$};
\node[scale=\s] (546) at (45,5) {$\bullet$};
\node[scale=\s] (547) at (46,5) {$\bullet$};
\node[scale=\s,rotate=\r] (548) at (47,5) {$\cdots$};
\node[scale=\s] (549) at (48,5) {$\bullet$};
\node[scale=\s] (550) at (49,5) {};
\node[scale=\s] (551) at (50,5) {$\bullet$};
\node[scale=\s] (552) at (51,5) {$\bullet$};
\node[scale=\s] (553) at (52,5) {$\bullet$};
\node[scale=\s,rotate=\r] (554) at (53,5) {$\cdots$};
\node[scale=\s] (555) at (54,5) {$\bullet$};
\node[scale=\s] (556) at (55,5) {};
\node[scale=\s,rotate=\r] (557) at (56,5) {$\cdots$};
\node[scale=\s] (558) at (57,5) {};
\node[scale=\s] (559) at (58,5) {$\bullet$};
\node[scale=\s] (560) at (59,5) {$\bullet$};
\node[scale=\s] (561) at (60,5) {$\bullet$};
\node[scale=\s,rotate=\r] (562) at (61,5) {$\cdots$};
\node[scale=\s] (563) at (62,5) {$\bullet$};
\node[scale=\s] (41) at (2,4) {$\bullet$};
\node[scale=\s] (42) at (8,4) {$\bullet$};
\node[scale=\s,rotate=\r] (410) at (12,4) {$\cdots$};
\node[scale=\s] (43) at (16,4) {$\bullet$};
\node[scale=\s] (44) at (22,4) {$\bullet$};
\node[scale=\s] (45) at (28,4) {$\bullet$};
\node[scale=\s,rotate=\r] (413) at (32,4) {$\cdots$};
\node[scale=\s] (46) at (36,4) {$\bullet$};
\node[scale=\s,rotate=\r] (411) at (41,4) {$\cdots$};
\node[scale=\s] (47) at (46,4) {$\bullet$};
\node[scale=\s] (48) at (52,4) {$\bullet$};
\node[scale=\s,rotate=\r] (412) at (56,4) {$\cdots$};
\node[scale=\s] (49) at (60,4) {$\bullet$};
\node[scale=\s] (31) at (10,3) {$\bullet$};
\node[scale=\s] (32) at (30,3) {$\bullet$};
\node[scale=\s] (33) at (54,3) {$\bullet$};
\node[scale=\s,rotate=\r] (310) at (41,3) {$\cdots$};
\node[scale=\s] (21) at (30,2) {$\bullet$};
\node[scale=\s] (11) at (30,1) {$\bullet$};
\draw [->,Blue] (51) --  node[pos=0.6,fill=white,inner sep=1,outer sep=1,scale=\s]{$1$} (41);
\draw [->] (52) --  node[pos=0.5,fill=white,inner sep=1,outer sep=1,scale=\s]{$2$} (41);
\draw [->] (53) --  node[pos=0.4,fill=white,inner sep=1,outer sep=1,scale=\s]{$3$} (41);
\draw [->] (55) --  node[pos=0.3,fill=white,inner sep=1,outer sep=1,scale=\s]{$n$} (41);
\draw [->,Blue] (57) --  node[pos=0.6,fill=white,inner sep=1,outer sep=1,scale=\s]{$1$} (42);
\draw [->] (58) --  node[pos=0.5,fill=white,inner sep=1,outer sep=1,scale=\s]{$3$} (42);
\draw [->] (59) --  node[pos=0.4,fill=white,inner sep=1,outer sep=1,scale=\s]{$4$} (42);
\draw [->] (511) --  node[pos=0.3,fill=white,inner sep=1,outer sep=1,scale=\s]{$n$} (42);
\draw [->,Blue] (515) --  node[pos=0.6,fill=white,inner sep=1,outer sep=1,scale=\s]{$1$} (43);
\draw [->] (516) --  node[pos=0.5,fill=white,inner sep=1,outer sep=1,scale=\s]{$2$} (43);
\draw [->] (517) --  node[pos=0.4,fill=white,inner sep=1,outer sep=1,scale=\s]{$3$} (43);
\draw [->] (519) --  node[pos=0.3,fill=white,inner sep=1,outer sep=1,scale=\s]{$n-1$} (43);
\draw [->,Blue] (521) --  node[pos=0.6,fill=white,inner sep=1,outer sep=1,scale=\s]{$1$} (44);
\draw [->] (522) --  node[pos=0.5,fill=white,inner sep=1,outer sep=1,scale=\s]{$2$} (44);
\draw [->] (523) --  node[pos=0.4,fill=white,inner sep=1,outer sep=1,scale=\s]{$3$} (44);
\draw [->] (525) --  node[pos=0.3,fill=white,inner sep=1,outer sep=1,scale=\s]{$n$} (44);
\draw [->,Blue] (527) --  node[pos=0.6,fill=white,inner sep=1,outer sep=1,scale=\s]{$1$} (45);
\draw [->] (528) --  node[pos=0.5,fill=white,inner sep=1,outer sep=1,scale=\s]{$3$} (45);
\draw [->] (529) --  node[pos=0.4,fill=white,inner sep=1,outer sep=1,scale=\s]{$4$} (45);
\draw [->] (531) --  node[pos=0.3,fill=white,inner sep=1,outer sep=1,scale=\s]{$n$} (45);
\draw [->,Blue] (535) --  node[pos=0.6,fill=white,inner sep=1,outer sep=1,scale=\s]{$1$} (46);
\draw [->] (536) --  node[pos=0.5,fill=white,inner sep=1,outer sep=1,scale=\s]{$2$} (46);
\draw [->] (537) --  node[pos=0.4,fill=white,inner sep=1,outer sep=1,scale=\s]{$3$} (46);
\draw [->] (539) --  node[pos=0.3,fill=white,inner sep=1,outer sep=1,scale=\s]{$n-1$} (46);
\draw [->,Blue] (545) --  node[pos=0.6,fill=white,inner sep=1,outer sep=1,scale=\s]{$1$} (47);
\draw [->] (546) --  node[pos=0.5,fill=white,inner sep=1,outer sep=1,scale=\s]{$2$} (47);
\draw [->] (547) --  node[pos=0.4,fill=white,inner sep=1,outer sep=1,scale=\s]{$3$} (47);
\draw [->] (549) --  node[pos=0.3,fill=white,inner sep=1,outer sep=1,scale=\s]{$n$} (47);
\draw [->,Blue] (551) --  node[pos=0.6,fill=white,inner sep=1,outer sep=1,scale=\s]{$1$} (48);
\draw [->] (552) --  node[pos=0.5,fill=white,inner sep=1,outer sep=1,scale=\s]{$3$} (48);
\draw [->] (553) --  node[pos=0.4,fill=white,inner sep=1,outer sep=1,scale=\s]{$4$} (48);
\draw [->] (555) --  node[pos=0.3,fill=white,inner sep=1,outer sep=1,scale=\s]{$n$} (48);
\draw [->,Blue] (559) --  node[pos=0.6,fill=white,inner sep=1,outer sep=1,scale=\s]{$1$} (49);
\draw [->] (560) --  node[pos=0.5,fill=white,inner sep=1,outer sep=1,scale=\s]{$2$} (49);
\draw [->] (561) --  node[pos=0.4,fill=white,inner sep=1,outer sep=1,scale=\s]{$3$} (49);
\draw [->] (563) --  node[pos=0.3,fill=white,inner sep=1,outer sep=1,scale=\s]{$n-1$} (49);
\draw [->,Blue] (41) --  node[pos=0.6,fill=white,inner sep=1,outer sep=1,scale=\s]{$1$} (31);
\draw [->] (42) --  node[pos=0.5,fill=white,inner sep=1,outer sep=1,scale=\s]{$2$} (31);
\draw [->] (43) --  node[pos=0.4,fill=white,inner sep=1,outer sep=1,scale=\s]{$n$} (31);
\draw [->,Blue] (44) --  node[pos=0.6,fill=white,inner sep=1,outer sep=1,scale=\s]{$1$} (32);
\draw [->] (45) --  node[pos=0.5,fill=white,inner sep=1,outer sep=1,scale=\s]{$3$} (32);
\draw [->] (46) --  node[pos=0.4,fill=white,inner sep=1,outer sep=1,scale=\s]{$n$} (32);
\draw [->,Blue] (47) --  node[pos=0.6,fill=white,inner sep=1,outer sep=1,scale=\s]{$1$} (33);
\draw [->] (48) --  node[pos=0.5,fill=white,inner sep=1,outer sep=1,scale=\s]{$2$} (33);
\draw [->] (49) --  node[pos=0.4,fill=white,inner sep=1,outer sep=1,scale=\s]{$n-1$} (33);
\draw [->,Blue] (31) --  node[pos=0.6,fill=white,inner sep=1,outer sep=1,scale=\s]{$1$} (21);
\draw [->] (32) --  node[pos=0.5,fill=white,inner sep=1,outer sep=1,scale=\s]{$2$} (21);
\draw [->] (33) --  node[pos=0.4,fill=white,inner sep=1,outer sep=1,scale=\s]{$n$} (21);
\draw [->,Blue] (21) --  node[pos=0.6,fill=white,inner sep=1,outer sep=1,scale=\s]{$1$} (11);
\end{tikzpicture}
\]

\end{example}

\section{Chain complexes obtained from the procedure} \label{complejo-exacto}

In this section, we prove the claim that the Chain Complex Construction Procedure described in Section \ref{Chain-complex-procedure} actually yields a chain complex, and state a conjecture about the exactness of these chain complexes. We begin this section by recalling the first step of the procedure, which represents the monomial   ideal generators into certain diagrams, interprets those as chain complexes and view the completion rules from the perspective of chain complexes.  %We state now the main result.

\begin{theorem}\label{chain-complex-prop}
The sequence of free modules and maps between them obtained after applying the \emph{Chain Complex Construction Procedure} is a chain complex of free modules. 
\end{theorem}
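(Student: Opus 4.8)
The plan is to reduce the claim to the single identity $d_{n}\circ d_{n+1}=0$ for every $n$. Freeness of each module and $R$-linearity of each differential are built into the translation of Subsection \ref{diagram-procedure}: every degree carries a direct sum of copies of $R$, one per vertex $\bullet$, and every marked arrow $\bullet\xrightarrow{i}\bullet$ is by definition the homothety $R\xrightarrow{x_i}R$ up to a sign. Assembling the arrows degree by degree therefore produces honest $R$-linear maps $d_n$ between free modules, and nothing more is needed for these two points.

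First I would compute $d_n\circ d_{n+1}$ on matrix entries. If $u$ is a vertex in degree $n-1$ and $v$ a vertex in degree $n+1$, then the $(u,v)$-entry of $d_n\circ d_{n+1}$ is the signed sum $\sum \varepsilon\, x_a x_b$ over all directed length-two paths $v\xrightarrow{a}w\xrightarrow{b}u$ through degree $n$, where $\varepsilon\in\{\pm 1\}$ collects the signs of the two arrows. Thus the whole theorem is equivalent to the statement that, for every source–target pair $(v,u)$, these contributions cancel in $R$.

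The heart of the matter — and what Step 1 is really recording — is that each generating block of $D$ is itself a two-term complex. For a column \eqref{i-j-diferente} or \eqref{i-j-igual} the unique length-two path contributes $x_ax_b=q_l\in I$, which is $0$ in $R$; for a diamond \eqref{i-j-no-esta} the apex map $r\mapsto(x_jr,-x_ir)$ followed by the base map $(s,t)\mapsto x_is+x_jt$ sends $r$ to $x_ix_jr-x_jx_ir=0$ by commutativity. Hence each basic diagram squares to zero in isolation. I would then argue, by induction along the construction, that the global square vanishes because every length-two path through the freshly built degree lies inside exactly one such block: when the rules of Step \ref{paste-rules} pass from $d_{n+1}$ to $d_{n+2}$, an arrow $v\xrightarrow{a}w$ is created above $w$ precisely so that $v\to w\to u$ completes a column (with $x_ax_b\in I$) or a diamond (when two arrows of $d_{n+1}$ meet at a common target). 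Summing over paths, the $(u,v)$-entry becomes a sum of the vanishing contributions of the blocks through $v$, so it is $0$.

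The hard part will be the \textbf{No repetitions} rule \ref{no-rep}. I expect the genuinely delicate step to be verifying that when several completing diagrams are glued along a shared arrow — possibly mixing columns and diamonds — the invariant ``every length-two path through the new degree closes up into a single column or diamond'' is preserved, so that no path escapes a block and no term is left uncancelled. Concretely, when a single merged arrow $v\xrightarrow{a}w$ must be composed with several outgoing arrows $w\xrightarrow{b}u$ of $w$, one has to check that each resulting product $x_ax_b$ is forced into $I$ (the column case) or is matched by a sign-reversed partner path (the diamond case); this amounts to a careful local analysis of the arrows incident to each merged vertex in the inductive step, and it is where the combinatorics of the procedure, rather than the elementary algebra of $R$, does the real work.
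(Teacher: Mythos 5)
Your overall strategy coincides with the paper's: reduce the claim to $d_n\circ d_{n+1}=0$, observe that each column diagram \eqref{i-j-diferente}, \eqref{i-j-igual} composes to multiplication by an element of $I$ (hence zero in $R$) and each diamond \eqref{i-j-no-esta} composes to $x_ix_jr-x_jx_ir=0$ by commutativity, and then induct along the construction, checking that every composition arising in a newly completed degree is accounted for by one of these local blocks. The paper packages the inductive step through Lemma \ref{gluing-chain} (gluing a two-term complex onto an existing complex along a shared arrow), whereas you phrase it as cancellation of signed length-two paths in the matrix entries of $d_n\circ d_{n+1}$; these are equivalent bookkeeping devices, and your path formulation is a perfectly serviceable substitute.

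However, your proposal stops exactly where the real work lies. You write that ``one has to check'' that, at a vertex produced by the No repetitions rule \ref{no-rep}, each length-two path either has its product forced into $I$ or is matched by a sign-reversed partner path --- but you never perform this check, and it is not automatic: a merged arrow is shared by several completing diagrams, so a path through it can a priori continue into an arrow belonging to a different block from the one that created it, and nothing in the earlier part of your argument rules out an unmatched term. The paper closes precisely this gap by an explicit three-case analysis of the possible mergings (two column diagrams sharing their top arrow; a column diagram sharing its top arrow with a diamond; two diamonds identified along a common top arrow), exhibiting the merged configuration in each case and verifying directly that all compositions vanish --- either because every product lies in $I$, or because the two paths of a completed diamond carry opposite signs and cancel. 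Until you carry out that case analysis, or prove that your invariant (``every length-two path through the new degree closes up into a single column or diamond'') survives the merging operation, your inductive step is asserted rather than proved.
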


Before proving this result we state a technical lemma that says how the completion of diagrams works and can be viewed in terms of chain complexes.
\begin{lemma} \label{gluing-chain}
Consider the following two chain complexes:
\begin{equation*} \label{complex-1}
\cdots \to X_{n+2} \xrightarrow{d'} X_{n+1} \xrightarrow{f} M_n \xrightarrow{d} M_{n-1} \to M_{n-2} \to \cdots
\end{equation*}
and 
\begin{equation*} \label{complex-2}
\cdots \to 0 \to M_{n+1} \xrightarrow{g} M_n \xrightarrow{d} M_{n-1} \to 0 \to \cdots.
\end{equation*}
The following sequence is also a chain complex:
\[
\cdots \to X_{n+2} \xrightarrow{\tiny \begin{pmatrix} d' \\ 0 \end{pmatrix}} X_{n+1} \oplus M_{n+1} \xrightarrow{\tiny \begin{pmatrix} f & g \end{pmatrix}} M_n \xrightarrow{d} M_{n-1} \to M_{n-2} \to \cdots
\]
\end{lemma}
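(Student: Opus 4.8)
The plan is to verify directly that consecutive differentials in the proposed sequence compose to zero, which reduces to a handful of block-matrix computations invoking the two given chain complexes. The key observation is that, away from the two junctions at degrees $n+1$ and $n$, the differentials of the glued sequence coincide verbatim with those of the first complex; hence all of those compositions already vanish by hypothesis, and only the compositions involving the new maps $\begin{pmatrix} d' \\ 0 \end{pmatrix}$ and $\begin{pmatrix} f & g \end{pmatrix}$ require attention.

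First I would examine the composition landing in $M_n$, namely $d \circ \begin{pmatrix} f & g \end{pmatrix}$. Expanding the block row gives $\begin{pmatrix} d\circ f & d \circ g \end{pmatrix}$. The entry $d \circ f$ vanishes because the first sequence is a chain complex, while $d \circ g$ vanishes because the second sequence is a chain complex; therefore the composite is the zero map. This is the only place where the second complex is genuinely used, and it supplies precisely the identity $d \circ g = 0$ that the first complex does not provide on its own.

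Next I would examine the composition out of $X_{n+2}$, namely $\begin{pmatrix} f & g \end{pmatrix} \circ \begin{pmatrix} d' \\ 0 \end{pmatrix}$. The block product equals $f \circ d' + g \circ 0 = f \circ d'$, which is zero since $X_{n+2} \xrightarrow{d'} X_{n+1} \xrightarrow{f} M_n$ are consecutive differentials of the first chain complex. Finally, should a differential $d''$ enter $X_{n+2}$ from $X_{n+3}$, its composition with $\begin{pmatrix} d' \\ 0 \end{pmatrix}$ is $\begin{pmatrix} d' \circ d'' \\ 0 \end{pmatrix} = \begin{pmatrix} 0 \\ 0 \end{pmatrix}$, again by the first complex.

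There is no genuine obstacle here; the statement is an organizational lemma, and its only subtlety is correctly tracking the block structure of the direct-sum differential so that each vanishing is attributed to the correct hypothesis. The conceptual point worth emphasizing is that $g$ is attached at the new summand $M_{n+1}$ so that its image lands inside $\ker d$, exactly the subobject into which $f$ already maps; this compatibility is what permits the two complexes to be amalgamated along the shared segment $M_n \xrightarrow{d} M_{n-1}$ into a single chain complex, and it is the abstract counterpart of the ``diagram completion'' rules of Step~\ref{paste-rules}.
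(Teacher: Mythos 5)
Your proof is correct and follows essentially the same route as the paper's: a direct verification that the two compositions involving the new block maps vanish, with $d \circ g = 0$ supplied by the second complex and $f \circ d' = 0$, $d \circ f = 0$ supplied by the first. Your additional checks (the composition out of $X_{n+3}$ and the observation that all other compositions are inherited verbatim from the first complex) are fine but only make explicit what the paper leaves implicit.
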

\begin{proof}
This is very straightforward, and all that is needed to do is check that composition of two consecutive maps gives 0. Note that 
\[
 \begin{pmatrix} f & g \end{pmatrix} \circ  \begin{pmatrix} d' \\ 0 \end{pmatrix} (x) = f(d'(x)) + g(0) = 0, 
\]
and
\[
d \circ  \begin{pmatrix} f & g \end{pmatrix} (x,m) = d(f(x)) + d(g(m)) = 0,
\]
due to the chain complexes given by hypothesis. 
\end{proof}

\begin{proof}[Proof of Theorem \ref{chain-complex-prop}]
We begin by noting that each diagram from the set $D$ corresponds to a short chain complex. There are specifically two cases to deal with: the column diagram (repeated included) and the diamond diagram. We look at each of these along with their respective completions.

Each column diagram corresponds to a quadratic monomial appearing in the ideal, $x_ix_j$, with the possibility that $i = j$. At the end of Step 1, we have the column diagram, which represent the following downwards diagrams in $\Rmod$ (with degree lowering maps):
\[
\begin{tikzpicture}[yscale=0.9,baseline=(current  bounding  box.center)]
\node (A) at (-2,0) {
\begin{tikzpicture}[yscale=0.9]
\node (a1) at (0,5) {$R$};
\node (b0) at (0,4) {$R$};
\node (c0) at (0,3) {$R$};
\draw [->] (a1) -- node[left]{\text{\tiny $x_j$}} (b0);
\draw [->] (b0) -- node[left]{\text{\tiny $x_i$}} (c0);
\end{tikzpicture}
};
\node (T) at (0,0) {\text{or}};
\node (B) at (2,0) {
\begin{tikzpicture}[yscale=0.9]
\node (a1) at (0,5) {$R$};
\node (b0) at (0,4) {$R$};
\node (c0) at (0,3) {$R$};
\node (p) at (0.5,3) {.};
\draw [->] (a1) -- node[left]{\text{\tiny $x_i$}} (b0);
\draw [->] (b0) -- node[left]{\text{\tiny $x_j$}} (c0);
\end{tikzpicture}
};

\end{tikzpicture}
\]
If $i=j$, then these diagrams are the same. Now, if we take any $r \in R$ from the top-degree module and apply the two degree-lowering maps, we get $x_ix_jr$ or $x_jx_ir$ (respectively). However, both of the outcomes yield $0$ since $x_jx_i = x_ix_j \in I$. Hence, if we extend with 0 to the other degrees (in terms of maps and modules), then two previous diagrams do indeed form chain complexes.

For the diamond  diagram we have  the following representation in $\Rmod$ (again with degree lowering maps):
\[
\begin{tikzpicture}[yscale=0.9,baseline=(current  bounding  box.center)]
\node (a1) at (0.5,5) {$R$};
\node (b0) at (0,4) {$R$};
\node (b1) at (1,4) {$R$};
\node (c0) at (0.5,3) {$R$};
\node (p) at (1,3) {.};
\draw [->] (a1) -- node[left]{\text{\tiny $x_j$}} (b0);
\draw [->] (a1) -- node[right]{\text{\tiny $-x_{i}$}} (b1);
\draw [->] (b0) -- node[left]{\text{\tiny $x_i$}} (c0);
\draw [->] (b1) -- node[right]{\text{\tiny $x_j$}} (c0);
\end{tikzpicture}
\]
This time, if to any $r \in R$ on the top-degree module we apply the following two degree-lowering maps, we get, in the bottom degree module, $x_ix_jr - x_jx_ir$. Note that neither component individually yields $0$ since $x_ix_j \not \in I$. However, since we are working in a commutative ring, the result is $0$. Again, if we extend this sequence of modules and maps with 0 (maps and modules), then we also end up with a chain complex. In other words, all the diagrams in the set $D$ from Step 1 represent chain complexes.

Next, in Step 2, we choose an initial map, called $d_0$, from degree $1$ to degree $0$ given by multiplication with $x_i$, a factor of some generator of $I$. This is corresponds to the following chain complex: 
\[
\begin{tikzpicture}[yscale=0.9]
\node (a1) at (0,5) {$0$};
\node (b0) at (0,4) {$R$};
%\node (b1) at (1,4) {$\bullet$};
%
\node (c0) at (0,3) {$R$};
\node (d0) at (0,2) {$0$};
\draw [->] (a1) -- node[left]{} (b0);
\draw [->] (b0) -- node[left]{\text{\tiny $x_i$}} (c0);
\draw [->] (c0) -- node[left]{} (d0);
\end{tikzpicture}
\]

Now, we continue with the completion rules described in Step 3. At this point, we can complete the initial map with only column diagrams and after all the column diagram. For each column diagram, we apply Lemma \ref{gluing-chain} and obtain a new chain complex completion. After all the column diagrams are considered, we obtain a map from degree 2 to degree 1, called $d_1$, as in the following diagram:
\begin{equation}\label{diagram-step-1}
\begin{tikzpicture}[yscale=0.9]
\node (o0) at (1,6) {$0$};
\node (a1) at (0,5) {$R$}; \node (a11) at (0.5,5) {$\oplus$}; \node (a111) at (1.0,5) {$\cdots$}; \node (a1111) at (1.5,5) {$\oplus$}; \node (a2) at (2.0,5) {$R$};
\node (b0) at (1,4) {$R$};
%\node (b1) at (1,4) {$\bullet$};
%
\node (c0) at (1,3) {$R$};
\node (e0) at (1,2) {$0$};
\draw [->] (o0) -- node[left]{} (a111);
\draw [->] (a1) -- node[left]{\text{\tiny $x_{j_1}$}} (b0);
\draw [->] (a2) -- node[right]{\text{\tiny $x_{j_m}$}} (b0);
\draw [ ->] (b0) -- node[left]{\text{\tiny $x_i$}} (c0);
\draw [->] (c0) -- node[left]{} (e0);
\end{tikzpicture}
\end{equation}

Step 4 corresponds to repeating Step 3, but completing with a map for the next degree, in this case the map from degree 3 to degree 2, called $d_2$. This time, we also have to take into account the possibility of diamond diagram in the completion rules. 

We first observe the column completion, which is shown next as the process of applying Lemma \ref{gluing-chain} again. This gives the following chain complex:
\[
\begin{tikzpicture}[yscale=0.9,baseline=(current  bounding  box.center)]
\node (A) at (-1.5,-0.5) {
\begin{tikzpicture}[yscale=0.9]
\node (a1) at (0,5) {$0$};
\node (b0) at (0,4) {$R \oplus \cdots \oplus  R$};
\node (c0) at (0,3) {$R$};
\node (d0) at (0,2) {$R$};
\draw [->] (a1) -- node[left]{} (b0);
\draw [Red, thick, ->] (-0.8,3.8) -- node[left]{\text{\tiny $x_{j_1}$}} (c0);
\node () at (0,3.6) {\tiny $\cdots$};
\draw [->] (0.8,3.8) -- node[right]{\text{\tiny $x_{j_m}$}} (c0);
\draw [->] (c0) -- node[left]{\text{\tiny $x_i$}} (d0);
\end{tikzpicture}
};
\node[black] (G) at (0.1,-0.5) {$\leftrightarrow$};
\node (B) at (1,0.5) {
\begin{tikzpicture}[yscale=0.9]
\node (o0) at (0,6) {$0$};
\node (a1) at (0,5) {$R$};
\node (b0) at (0,4) {$R$};
\node (c0) at (0,3) {$R$};
\draw [->] (o0) -- node[left]{} (a1);
\draw [->] (a1) -- node[left]{\text{\tiny $x_{l}$}} (b0);
\draw [Red,thick, ->] (b0) -- node[left]{\text{\tiny $x_{j_1}$}} (c0);
\end{tikzpicture}
};
\node (R) at (-0.6,-0.5) {
\begin{tikzpicture}[yscale=0.9]
\draw[Red,thick,dotted] (-3.8,-1,1) rectangle  (0.2,0.1);
\end{tikzpicture}
};
\node (E) at (2.15,-0.5) {$\mapsto$};
\node (F) at (3,0) {
\begin{tikzpicture}[yscale=0.9]
\node (o0) at (0,6) {$0$};
\node (a1) at (0,5) {$R$};
\node (b0) at (0,4) {\phantom{$R$}};
\node (c0) at (0,3) {$R$};
\node (d0) at (0,2) {$R$};
\draw [->] (o0) -- node[left]{} (a1);
\draw [->] (a1) -- node[left]{\text{\tiny $x_{l}$}} (b0);
\draw [Red, thick, ->] (b0) -- node[left]{\text{\tiny $x_{j_1}$}} (c0);
\draw [->] (c0) -- node[left]{\text{\tiny $x_i$}} (d0);
\end{tikzpicture}
};
\node () at (4,0) {$R \oplus \cdots \oplus  R$};
\node () at (3.8,-0.5) {\tiny $\cdots$};
\draw [->] (4.8,-0.2) -- node[right, pos=0.45]{\text{\phantom{x}\tiny $x_{j_m}$}} (3.5,-0.9);
\end{tikzpicture}
\]
Here the red arrow in the dotted box is part of the given differential map that is completed by a column diagram chain complex along this precise red arrow.

We now consider any diamond diagrams that might appear. To do this, we check whether the bottom part of a diamond diagram in $D$ appears anywhere in the differential $d_1$ of the chain complex shown in \eqref{diagram-step-1}. If it does, then complete that part of the diagram with a diamond diagram as follows:
\[
\begin{tikzpicture}[yscale=0.9,baseline=(current  bounding  box.center)]
\node (A) at (-2.5,-0.5) {
\begin{tikzpicture}[yscale=0.9]
\node (a1) at (0,5) {$0$};
 \node (b0) at (0,4) {$\oplus$}; \node (b00) at (1,4) {$\oplus$}; \node (b000) at (1.5,4) {$\cdots$}; \node (b0000) at (2,4) {$\oplus$};
\node (b-1) at (-0.5,4) {$R$}; \node (b1) at (0.5,4) {$R$};\node (b11) at (2.5,4) {$R$}; 
\node (c0) at (0,3) {$R$};
\node (d0) at (0,2) {$R$};
\draw [->] (a1) -- node[left]{} (b0);
\draw [Red, thick, ->] (b-1) -- node[left]{\text{\tiny $x_{j_2}$}} (c0);
\draw [Red, thick, ->] (b1) -- node[right]{\text{\tiny $x_{j_3}$}} (c0);
\draw [->] (b11) -- node[right, pos=0.45]{\text{\phantom{x}\tiny $x_{j_m}$}} (c0);
\draw [->] (c0) -- node[left]{\text{\tiny $x_i$}} (d0);
\end{tikzpicture}
};
\node[black] (G) at (-0.3,-0.5) {$\leftrightarrow$};
\node (B) at (1,0.5) {
\begin{tikzpicture}[yscale=0.9]
\node (o0) at (0,6) {$0$};
\node (a1) at (0,5) {$R$};
\node (b0) at (0,4) {$\oplus$};
\node (b-1) at (-0.5,4) {$R$}; \node (b1) at (0.5,4) {$R$};
\node (c0) at (0,3) {$R$};
\draw [->] (o0) -- node[left]{} (a1);
\draw [->] (a1) -- node[left]{\text{\tiny $x_{j_3}$}} (b-1);
\draw [->] (a1) -- node[right]{\text{\tiny $-x_{j_2}$}} (b1);
\draw [Red,thick, ->] (b-1) -- node[left]{\text{\tiny $x_{j_2}$}} (c0);
\draw [Red,thick, ->] (b1) -- node[right]{\text{\tiny $x_{j_3}$}} (c0);
\end{tikzpicture}
};
\node (R) at (-1.25,-0.5) {
\begin{tikzpicture}[yscale=0.9]
\draw[Red,thick,dotted] (-4.0,-1,1) rectangle  (2.0,0.1);
\end{tikzpicture}
};
\node (E) at (2.25,-0.5) {$\mapsto$};
\node (F) at (4.5,0) {
\begin{tikzpicture}[yscale=0.9]
\node (o0) at (0,6) {$0$};
\node (a1) at (0,5) {$R$};
 \node (b0) at (0,4) {$\oplus$}; \node (b00) at (1,4) {$\oplus$}; \node (b000) at (1.5,4) {$\cdots$}; \node (b0000) at (2,4) {$\oplus$};
\node (b-1) at (-0.5,4) {$R$}; \node (b1) at (0.5,4) {$R$};\node (b11) at (2.5,4) {$R$}; 
\node (c0) at (0,3) {$R$};
\node (d0) at (0,2) {$R$};
\draw [->] (o0) -- node[left]{} (a1);
\draw [->] (a1) -- node[left]{\text{\tiny $x_{j_3}$}} (b-1);
\draw [->] (a1) -- node[right]{\text{\tiny $-x_{j_2}$}} (b1);
\draw [Red,thick, ->] (b-1) -- node[left]{\text{\tiny $x_{j_2}$}} (c0);
\draw [Red,thick, ->] (b1) -- node[right]{\text{\tiny $x_{j_3}$}} (c0);
\draw [->] (b11) -- node[right, pos=0.45]{\text{\phantom{x}\tiny $x_{j_m}$}} (c0);
\draw [->] (c0) -- node[left]{\text{\tiny $x_i$}} (d0);
\end{tikzpicture}
};
\end{tikzpicture}
\]

We have already established that the diamond figure leads to maps that yield zero when composed. Therefore, in this case, we also obtain a chain complex. 

After all the column diagram completion and diamond diagram completion are done, we have $d_2$, a map from degree 3 to degree 2, that when composed with $d_1$ gives 0. This is because is a separate combination of sections of columns diagrams and diamond diagrams. We continue with \ref{iteration} one more time. Nevertheless, since, from now on we have the upper part of the diamond diagrams might appear, we might see an application of the ``No repetitions'' rule from \ref{no-rep}.

This means, that what remains to verify is that the no repetition condition in \ref{no-rep} also extends the differential maps in a manner consistent with that of a chain complex. This applies for all degrees where the procedure applies. For this scenario to happen, we need to encounter two diagrams that share a same index top arrow part of a diagram (that is, both arrow end at the same place and have same $x_i$ label). 
%
%
% Funciona el lema para explicar el complejo?
%
%
%
This can be done in three ways: 
\begin{enumerate}
\item two column diagrams, 
\item one column diagram and a diamond diagram, or 
\item two diamond diagrams. 
\end{enumerate}
In all cases, these  given diagrams must complete a diamond diagram, either on the top part or on the bottom part of the diamond. We will assume that the given diamond diagram represents part of the map from degree $i+1$ to degree $i$.

We begin by explaining the two column diagram. Suppose we have that the top part of a diagram is completed with column diagrams that have different lower part, but same top part. The ``No repetition'' condition stipulates that it suffices to consider just one of these upper column diagrams for the completion to occur. This is illustrated in the following diagram:
\[
\begin{tikzpicture}[yscale=0.9,baseline=(current  bounding  box.center)]
\node (A) at (-3,-0.5) {
\begin{tikzpicture}[yscale=0.9]
% \node (o0) at (0,6) {$0$};
%
\node (a1) at (0,5) {$R$};
 \node (b0) at (0,4) {$\oplus$}; \node (b00) at (1,4) {$\oplus$}; \node (b000) at (1.5,4) {$\cdots$}; \node (b0000) at (2,4) {$\oplus$};
\node (b-1) at (-0.5,4) {$R$}; \node (b1) at (0.5,4) {$R$};\node (b11) at (2.5,4) {$R$}; 
\node (c0) at (0,3) {$M_{i-1}$};

\draw [thick, Red,->] (a1) -- node[left]{\text{\tiny $x_{j_3}$}} (b-1);
\draw [thick, Green,->] (a1) -- node[right]{\text{\tiny $-x_{j_2}$}} (b1);
\draw [->] (b0) -- node[left]{\text{\tiny $d_{i-1}$}} (c0);

\end{tikzpicture}
};
\node[black] (G) at (-1,0) {$\leftrightarrow$};
\node (B) at (1,0.5) {
\begin{tikzpicture}[yscale=0.9]
\node (a-1) at (-0.5,5) {$R$};
\node (a1) at (0.5,5) {$R$};
\node (b-1) at (-0.5,4) {$R$}; 
\node (b1) at (0.5,4) {$R$};
\node (c-1) at (-0.5,3) {$R$};
\node (c1) at (0.5,3) {$R$};
\draw [Blue,thick,->] (a-1) -- node[left]{\text{\tiny $x_l$}} (b-1);
\draw [Blue,thick,->] (a1) -- node[right]{\text{\tiny $x_l$}} (b1);
\draw [Red,thick, ->] (b-1) -- node[left]{\text{\tiny $x_{j_3}$}} (c-1);
\draw [Green,thick, ->] (b1) -- node[right]{\text{\tiny $-x_{j_2}$}} (c1);
\end{tikzpicture}
};
\node (R) at (-1.3,0) {
\begin{tikzpicture}[yscale=0.9]
\draw[Red,thick,dotted] (-3.5,-1,1) rectangle  (3.5,0.1);
\end{tikzpicture}
};
\node (E) at (2.85,0) {$\mapsto$};
\node (F) at (5,0) {
\begin{tikzpicture}[yscale=0.9]
\node (o0) at (0,6) {$R$};
\node (a1) at (0,5) {$R$};
  \node (b0) at (0,4) {$\oplus$}; \node (b00) at (1,4) {$\oplus$}; \node (b000) at (1.5,4) {$\cdots$}; \node (b0000) at (2,4) {$\oplus$};
\node (b-1) at (-0.5,4) {$R$}; \node (b1) at (0.5,4) {$R$}; \node (b11) at (2.5,4) {$R$}; 
\node (c0) at (0,3) {$M_{i-1}$};
\draw [thick, Blue,->] (o0) -- node[left]{\text{\tiny $x_{l}$}} (a1);
\draw [thick, Red,->] (a1) -- node[left]{\text{\tiny $x_{j_3}$}} (b-1);
\draw [thick, Green,->] (a1) -- node[right]{\text{\tiny $-x_{j_2}$}} (b1);
\draw [->] (b0) -- node[left]{\text{\tiny $d_{i-1}$}} (c0);
\end{tikzpicture}
};
\end{tikzpicture}
\]
 
Since the two same index upper part of column diagrams coincide, the one of these column diagram seems to be missing. However, we can think of this arrow as identified with the other arrow of the same index, thus explaining the name ``No repetition''. Now, given that we are working with diagrams from $D$, following the resulting structure from the top ensures that composing two maps yields zero (in this case, multiplication by $x_l$ followed by $x_{j_3}$ and $-x_{j_2}$ give 0). This extends the previous chain complex with map from degree $i+2$ to degree $i+1$ in a way that composing to consecutive maps gives 0.

The one column and one diamond diagram case is illustrated next. Suppose we have the bottom part of a diamond diagram connected to the top part of a diagram, than can be completed by a diamond diagram and column diagram (of course, both of these diagrams belong to $D$). Now due to the ``No repetition''  the column diagram seems to vanish, as illustrated below.

\[
\begin{tikzpicture}[yscale=0.9,baseline=(current  bounding  box.center)]
\node (A) at (-3.5,-0.5) {
\begin{tikzpicture}[yscale=0.9]
\node (a1) at (0,5) {$R$};
\node (aa1) at (-1,5) {$R$};
 \node (b0) at (0,4) {$\oplus$}; \node (b00) at (1,4) {$\oplus$}; \node (b000) at (1.5,4) {$\cdots$}; \node (b0000) at (2,4) {$\oplus$};
\node (b-1) at (-0.5,4) {$R$}; \node (b1) at (0.5,4) {$R$};\node (b11) at (2.5,4) {$R$}; 
\node (c0) at (0,3) {$M_{i-1}$};
\draw [thick, Green,->] (a1) -- node[right,pos=0.9]{\text{\tiny $x_{j_3}$}} (b-1);
\draw [thick, Red,->] (a1) -- node[right]{\text{\tiny $-x_{j_2}$}} (b1);
\draw [thick, Orange,->] (aa1) -- node[left]{\text{\tiny $x_{j_4}$}} (b-1);
\draw [->] (b0) -- node[left]{\text{\tiny $d_{i_1}$}} (c0);
\end{tikzpicture}
};
\node[Red] (G) at (-1,0) {$\leftrightarrow$};
\node (B) at (1,0.5) {
\begin{tikzpicture}[yscale=0.9]
\node (a-1) at (-1,5) {$R$};
\node (a1) at (0.5,5) {$R$};
\node (b-11) at (-1.5,4) {$R$}; 
\node (b-1) at (-0.5,4) {$R$}; 
\node (b1) at (0.5,4) {$R$};
\node (c-1) at (-1,3) {$R$};
\node (c1) at (0.5,3) {$R$};
\draw [thick, Green,->] (a-1) -- node[left]{\text{\tiny $x_{j_3}$}} (b-11);
\draw [Orange,thick,->] (a-1) -- node[right]{\text{\tiny$-x_{j_4}$}} (b-1);
\draw [Orange,thick,->] (b-11) -- node[left]{\text{\tiny$x_{j_4}$}} (c-1);
\draw [thick, Green,->] (b-1) -- node[right]{\text{\tiny $x_{j_3}$}} (c-1);
\draw [Orange,thick,->] (a1) -- node[right]{\text{\tiny $-x_4$}} (b1);
\draw [Red,thick, ->] (b1) -- node[right]{\text{\tiny $-x_{j_2}$}} (c1);
\end{tikzpicture}
};
\node (R) at (-1.5,0) {
\begin{tikzpicture}[yscale=0.9]
\draw[Red,thick,dotted] (-4,-1,1) rectangle  (4,0.1);
\end{tikzpicture}
};
\node (E) at (3,0) {$\mapsto$};
\node (F) at (5.5,0) {
\begin{tikzpicture}[yscale=0.9]
\node (o0) at (-0.5,6) {$R$};
\node (a-1) at (-1,5) {$R$};
\node (a1) at (0,5) {$R$};
 \node (b0) at (0,4) {$\oplus$}; \node (b00) at (1,4) {$\oplus$}; \node (b000) at (1.5,4) {$\cdots$}; \node (b0000) at (2,4) {$\oplus$};
\node (b-1) at (-0.5,4) {$R$}; \node (b1) at (0.5,4) {$R$};\node (b11) at (2.5,4) {$R$}; 
\node (c0) at (0,3) {$M_{i-1}$};
\draw [thick, Green,->] (o0) -- node[left]{\text{\tiny $x_{j_3}$}} (a-1);
\draw [thick, Orange,->] (o0) -- node[right]{\text{\tiny $-x_{j_4}$}} (a1);
 \draw [thick, Orange,->] (a-1) -- node[left]{\text{\tiny $x_{j_4}$}} (b-1);
\draw [thick, Green,->] (a1) -- node[right,pos=0.9]{\text{\tiny $x_{j_3}$}} (b-1);
\draw [thick, Red,->] (a1) -- node[right]{\text{\tiny $-x_{j_2}$}} (b1);
\draw [->] (b0) -- node[left]{\text{\tiny $d_{i_1}$}} (c0);
\end{tikzpicture}
};
\end{tikzpicture}
\]
However, according to the ``No repetition'' rule, the column diagram is integrated and it is part of the map of the next degree. Again, this new map satisfy the condition that composing two consecutive maps gives $0$.

Finally, suppose we are given the bottom parts of two diamond diagrams, which are identified at one of the top vertex, and such that the arrow not connected to the shared vertex have the same index label. These two diamonds diagram can be completed by diamond diagrams which share a common arrow, as shown next. 
\[
\begin{tikzpicture}[yscale=0.9,baseline=(current  bounding  box.center)]
\node (A) at (-3.5,-0.5) {
\begin{tikzpicture}[yscale=0.9]
\node (1) at (1,5) {$R$};
\node (a1) at (0,5) {$R$};
\node (aa1) at (-1,5) {$R$};
 \node (b0) at (0,4) {$\oplus$}; \node (b00) at (1,4) {$\oplus$}; \node (b000) at (1.5,4) {$\cdots$}; \node (b0000) at (2,4) {$\oplus$};
\node (b-1) at (-0.5,4) {$R$}; \node (b1) at (0.5,4) {$R$};\node (b11) at (2.5,4) {$R$}; 
\node (c0) at (0,3) {$M_{i-1}$};
\draw [thick, Orange,->] (1) -- node[right]{\text{\tiny $-x_{j_4}$}} (b1);
\draw [thick, Green,->] (a1) -- node[right,pos=0.9]{\text{\tiny $x_{j_3}$}} (b-1);
\draw [thick, Red,->] (a1) -- node[right,pos=0.1]{\text{\tiny $x_{j_2}$}} (b1);
\draw [thick, Orange,->] (aa1) -- node[left]{\text{\tiny $-x_{j_4}$}} (b-1);
\draw [->] (b0) -- node[left]{\text{\tiny $d_{i_1}$}} (c0);
\end{tikzpicture}
};
\node[Red] (G) at (-1,0) {$\leftrightarrow$};
\node (B) at (1,0.5) {
\begin{tikzpicture}[yscale=0.9]
\node (a-1) at (-1,5) {$R$};
\node (a1) at (0.5,5) {$R$};
\node (b-11) at (-1.5,4) {$R$}; 
\node (b-1) at (-0.5,4) {$R$}; 
\node (b1) at (0,4) {$R$};
\node (b11) at (1,4) {$R$};
\node (c-1) at (-1,3) {$R$};
\node (c1) at (0.5,3) {$R$};
\draw [thick, Red,->] (a1) -- node[right]{\text{\tiny $x_{j_2}$}} (b11);
\draw [thick, Green,->] (a-1) -- node[left]{\text{\tiny $x_{j_3}$}} (b-11);
\draw [Orange,thick,->] (a-1) -- node[right]{\text{\tiny$x_{j_4}$}} (b-1);
\draw [Orange,thick,->] (b-11) -- node[left]{\text{\tiny$-x_{j_4}$}} (c-1);
\draw [thick, Green,->] (b-1) -- node[right,pos=0.9]{\text{\tiny $x_{j_3}$}} (c-1);
\draw [Orange,thick,->] (a1) -- node[left,pos=0.1]{\text{\tiny $x_{j_4}$}} (b1);
\draw [Red,thick, ->] (b1) -- node[right,pos=0.1]{\text{\tiny $x_{j_2}$}} (c1);
\draw [Orange,thick, ->] (b11) -- node[right,pos=0.5]{\text{\tiny $-x_{j_4}$}} (c1);

\end{tikzpicture}
};
\node (R) at (-1.25,0) {
\begin{tikzpicture}[yscale=0.9]
\draw[Red,thick,dotted] (-4.275,-1,1) rectangle  (4.275,0.1);
\end{tikzpicture}
};
\node (E) at (3,0) {$\mapsto$};
\node (F) at (5.5,0) {
\begin{tikzpicture}[yscale=0.9]
\node (o0) at (0,6) {$R$};
\node (a-1) at (-1,5) {$R$};
\node (a1) at (0,5) {$R$};
\node (a11) at (1,5) {$R$};

 \node (b0) at (0,4) {$\oplus$}; \node (b00) at (1,4) {$\oplus$}; \node (b000) at (1.5,4) {$\cdots$}; \node (b0000) at (2,4) {$\oplus$};
\node (b-1) at (-0.5,4) {$R$}; \node (b1) at (0.5,4) {$R$};\node (b11) at (2.5,4) {$R$}; 
\node (c0) at (0,3) {$M_{i-1}$};
\draw [thick, Green,->] (o0) -- node[left,pos=0.1]{\text{\tiny $x_{j_3}$}} (a-1);
\draw [thick, Orange,->] (o0) -- node[right,pos=0.9]{\text{\tiny $x_{j_4}$}} (a1);
\draw [thick, Red,->] (o0) -- node[right,pos=0.1]{\text{\tiny $x_{j_2}$}} (a11);
 \draw [thick, Orange,->] (a-1) -- node[left]{\text{\tiny $x_{-j_4}$}} (b-1);
\draw [thick, Green,->] (a1) -- node[right,pos=0.9]{\text{\tiny $x_{j_3}$}} (b-1);
\draw [thick, Red,->] (a1) -- node[right,pos=0.1]{\text{\tiny $x_{j_2}$}} (b1);
\draw [thick, Orange,->] (a11) -- node[right,pos=0.5]{\text{\tiny $-x_{j_4}$}} (b1);
\draw [->] (b0) -- node[left]{\text{\tiny $d_{i_1}$}} (c0);
\end{tikzpicture}
};
\end{tikzpicture}
\]
Again, this new map from degree $i+2$ to degree $i+1$ satisfy that when followed with the next map, it  gives $0$. This time because there are two diagrams in action that cancel each other after the second map.

This complete the explanation of each occurrence of the ``No repetition" rule. We now, continue the rest of the proof by induction. Suppose we have a map $d_{n-1}$ from some direct sum of $R$ to some $M_{n-1}$ (which is also a direct sum of $R$), as illustrated by this diagram:
\[
\begin{tikzpicture}[yscale=0.9]
\node (c-2) at (-1,3) {$R$}; \node (c1) at (-0.5,3) {$\oplus$}; \node (c0) at (0,3) {$\cdots$};  \node (c1) at (0.5,3) {$\oplus$}; \node (c2) at (1,3) {$R$};
\node (d0) at (0,2) {$M_{n-1}$};
\node (e0) at (0,1) {$\vdots$};
\draw [->] (c0) -- node[right]{\text{\tiny $d_{n-1}$}} (d0);
\draw [->] (d0) -- node[right]{\text{\tiny $d_{n-2}$}} (e0);
\end{tikzpicture}
\]

Complete this new map with any diamond diagrams and column diagrams that might appear, as indicated by the procedure. After all that is done we are left with a map, $d_{n+1}$, that extends the chain complex. We have already seen that all these completions give that $d_{n} \circ d_{n+1} = 0$, thus actually yielding a chain complex. 

By applying this process and enlarging the complex through the aggregation of a finitely generated free module and its corresponding maps at each new degree, we end with an infinite chain complex. 
\end{proof}

\begin{remark}
In the ``No repetition" rule, the three conditions described for the different situations can be found when working with the following rings:
\begin{enumerate}
\item Two column diagrams: \[R=k[x_1,x_2,x_3]/(x_1x_2,x_1x_3).\]
\item One column diagram and a diamond diagram: \[ R=k[x_1,x_2,x_3,x_4]/(x_1x_2,x_1x_3,x_2x_4).\]
\item Two diamond diagrams: \[R=k[x_1,x_2,x_3,x_4]/(x_1x_2,x_1x_3,x_1x_4).\] 
\end{enumerate}
\end{remark}

\begin{remark}
In the Chain Complex Construction Procedure, it's worth noting that the Representation rules can be modified. Specifically, instead of replacing the symbol $\bullet$ with $R$, we can use any other $R$-module $M$ and still obtain a chain complex. Furthermore, it doesn't even have to be the same $R$-module $M$ in each degree. This flexibility arises because following consecutive arrows from the differentials in the diagram will either yield a relation that belongs to the ideal $I$ or a relation that does not belong to $I$. In the latter case, the relation cancels out, ensuring that the resulting structure is still a chain complex.
\end{remark}

\begin{remark}
In the Diagram Construction Procedure, it is noteworthy that one can opt not to add any of the diagrams of the set $D$. Instead, one can add zero both as a map and as a module, and the resulting structure will still qualify as a chain complex. This is due to the same reasons as the previous remark. However, this resulting chain complex will certainly not be exact.
\end{remark}
The purpose of adding all possible diagrams of the set $D$ is to construct an exact chain complex. This is the content of the following general claim associated with the \emph{Chain Complex Construction Procedure}. 

\begin{conjecture}
The chain complex obtained by applying the \textit{Chain Complex Construction Procedure} results in a free resolution of the cokernel of the initial map. That is an exact chain complex of free modules, except in the beginning.
\end{conjecture}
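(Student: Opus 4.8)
The plan is to exploit the fact that, since $I$ is a monomial ideal, $R$ carries a fine $\Z^n$-grading and every differential produced by the procedure is, up to sign, multiplication by a single variable $x_i$, hence multihomogeneous of multidegree $\mathbf e_i$. First I would assign to each vertex of the diagram a multidegree by reading off the labels along any path from it down to the initial vertex in degree $0$; the relations encoded in the columns and diamonds guarantee that the associated monomial is well defined modulo $I$, so this is path-independent. This upgrades the complex $P_*$ of Theorem \ref{chain-complex-prop} to a complex of $\Z^n$-graded free modules with multihomogeneous maps, and exactness may then be checked one multidegree at a time: it suffices to prove that for each $\mathbf a \in \Z^n$ the complex of $k$-vector spaces $(P_*)_{\mathbf a}$ is exact away from homological degree $0$, with $H_0 = (R/(x_i))_{\mathbf a}$. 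Note that only the finitely many levels $n \le |\mathbf a|$ contribute to a fixed strand, so each $(P_*)_{\mathbf a}$ is a finite complex of finite-dimensional vector spaces.

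The second step is to recognize each strand $(P_*)_{\mathbf a}$ as the augmented, reduced cellular chain complex of a regular CW-complex $K_{\mathbf a}$, following the standard framework for monomial resolutions (see \cite{Peeva}). The key point is that each graded piece $R_{\mathbf b}$ is at most one-dimensional, spanned by $x^{\mathbf b}$ when $x^{\mathbf b}\notin I$ and zero otherwise, so $\dim_k (P_n)_{\mathbf a}$ equals the number of level-$n$ vertices $v$ whose multidegree $\mathbf d(v)$ satisfies $\mathbf a - \mathbf d(v)\ge \mathbf 0$ and $x^{\mathbf a - \mathbf d(v)}\notin I$. Declaring such vertices to be the cells of $K_{\mathbf a}$ and using that the column diagrams encode the monomial relations $x_ix_j=0$ while the diamond diagrams encode commutativity $x_ix_j=x_jx_i$, the differential of $(P_*)_{\mathbf a}$ becomes exactly the cellular boundary map. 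Exactness of the strand then becomes equivalent to $k$-acyclicity of $K_{\mathbf a}$, that is $\tilde H_*(K_{\mathbf a};k)=0$.

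Finally I would prove this acyclicity. The natural candidate is an explicit collapse: within $K_{\mathbf a}$, match each cell with the cell obtained by prepending the distinguished variable $x_i$ of the initial map whenever this is admissible. This is precisely the ``prepend the lowest admissible label'' principle implicit in the procedure, and it is exactly the identification enforced by the ``No repetitions'' rule of \ref{no-rep}. I would package this as an acyclic matching in the sense of algebraic discrete Morse theory, leaving a single critical cell, or equivalently as a contracting homotopy $s$ with $ds+sd=\mathrm{id}$ in positive degrees, checking compatibility with each of the three merge situations arising in the proof of Theorem \ref{chain-complex-prop}. An induction on $n$ and on $|\mathbf a|$, passing from $R$ to the quadratic monomial ring $R/(x_i)$, should organize the argument, with Examples \ref{ejemplo-2-cubo}, \ref{anillo-modulo-todos-cuadraticos} and \ref{ejemplo-O} serving as the first test cases for the matching.

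The hard part will be giving a closed combinatorial description of the level-$n$ vertices and their multidegrees directly from the recursion: because the ``No repetitions'' rule merges cells, the procedure does not obviously yield a clean indexing set, and so identifying $K_{\mathbf a}$ and establishing its acyclicity \emph{uniformly} in $\mathbf a$ and in the ring, equivalently producing one acyclic matching (or one homotopy) compatible with all three merge situations simultaneously, is where the general argument currently resists. This is precisely the gap that keeps the statement a conjecture rather than a theorem.
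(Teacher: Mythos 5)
There is no proof in the paper to compare against: the statement is left as an open conjecture, with the authors noting only that the obstruction is passing from the local completion rules to a global exactness argument. Your proposal does contribute a genuine global framework --- the reduction to finite strands via the fine $\Z^n$-grading is correct (each differential is multihomogeneous of multidegree $\mathbf e_i$, so a fixed strand $(P_*)_{\mathbf a}$ vanishes in homological degrees $n > |\mathbf a|$ and consists of finite-dimensional $k$-vector spaces), and checking exactness strand by strand is a legitimate reformulation of the problem. But the proposal does not close the conjecture, and the gap is not only the one you name. First, the appeal to the ``standard framework for monomial resolutions'' in \cite{Peeva} is misplaced: cellular resolution theory (Bayer--Sturmfels and its descendants) governs resolutions of monomial ideals \emph{over the polynomial ring} $k[x_1,\ldots,x_n]$, whereas here the complex lives over the quotient $R=k[x_1,\ldots,x_n]/I$, where free resolutions are generically infinite and no comparable cellular formalism is available; this is precisely why the literature the paper cites (\cite{Priddy}, \cite{Froberg}) resorts to Koszul-algebra and Poincar\'e-series techniques instead. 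So the existence of a regular CW-complex $K_{\mathbf a}$ whose reduced cellular chain complex is the strand is itself an unproved claim, not a known reduction: a strand is merely a finite complex of $k$-vector spaces with $0,\pm1$ matrices, and endowing it with a CW structure with correct incidences is part of what would have to be constructed.

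Second, even granting the cellular reinterpretation, the actual content of the conjecture --- acyclicity of every strand --- is exactly the step you defer: the acyclic matching (``prepend the distinguished variable when admissible'') is proposed but never shown to be well defined, to be compatible with the three merge situations of the ``No repetitions'' rule \ref{no-rep}, or to leave a single critical cell; and the preliminary claim that multidegrees of vertices are path-independent (needed even to define the strands, since merged vertices are reached by several paths) is asserted from the column and diamond relations but not proved, and would itself require an induction along the procedure of Theorem \ref{chain-complex-prop}. In short, what you have written is a plausible and well-informed research plan --- arguably sharper than the paper's own diagnosis of the difficulty --- but every step that goes beyond Theorem \ref{chain-complex-prop} (well-definedness of the grading, existence of $K_{\mathbf a}$, and the uniform acyclic matching) is stated rather than established, so the statement remains, for your argument as for the paper, a conjecture.
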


While the conjecture holds for all tested cases so far by the author, a general proof of this result remains elusive. The main difficulty lies in the fact that the procedure is described in a local manner, whereas a proof of exactness requires a global setting. 

We next proceed to show how the diagrams from the previous examples actually represent exact chain complexes. Nevertheless, in each case, a different set of techniques is used to achieve this goal.

\section{Examples of exactness of the resulting chain complex}

We use the previous constructions and show that the procedure yields a free resolution of $R/(x_1)$ as a respective $R$-module. We talk interchangeably between the diagram and the chain complex of free modules as the same object.

\subsection{$R=k[x_1,x_2,x_3]/(x_1x_2,x_1x_3)$} \label{2-cubo} We have explained in Example \ref{ejemplo-2-cubo} how to obtain an infinite diagram. If the Diagram of free modules procedure is applied, then Theorem \ref{chain-complex-prop} tells use that the result is the chain complex $P_*$. 

\begin{theorem} \label{teo-2-cubo}
Let $R=k[x_1,x_2,x_3]/(x_1x_2,x_1x_3)$. The chain complex $P_*$ is given at degree $n$ by $P_n = \bigoplus_{i=1}^{f_{n+1}} R$, where $f_i$ is the $i$-th Fibonacci number, and the differential $d_n : P_{n+1} \to P_n$, for $n\geq 4$, is given by the following block matrix:
\[
d_n = 
\begin{bmatrix}
d_{n-2} & 0 & 0 \\
0 & d_{n-3} & 0 \\
0 & 0 & d_{n-2} \\
\end{bmatrix}
\]
where
\[d_1 = \begin{bmatrix}
x_1
\end{bmatrix}\, \quad d_2 = \begin{bmatrix}
x_1 & x_2 \\
\end{bmatrix}
\text{ and }
d_3 = \begin{bmatrix}
x_1 & x_3 & 0 \\
0 & -x_2 & x_1 \\
\end{bmatrix},
\]
is a free resolution of the $R$-module $R/(x_1)$. That is $P_* \xrightarrow{d_0} R/(x_1) \to 0 $ is an exact sequence, with $d_0$ the usual projection homomorphism.
\end{theorem}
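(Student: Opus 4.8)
The plan is to leverage the self-similar block structure of the differentials, turning exactness into a short induction anchored by three explicit computations in $R$. First I would record, from the construction in Example~\ref{ejemplo-2-cubo}, two structural facts: that $P_n$ has rank $f_{n+1}$, and that for $n \geq 4$ the matrix $d_n$ is block-diagonal, $d_n = \mathrm{diag}(d_{n-2}, d_{n-3}, d_{n-2})$, with respect to a fixed ordering of the generators produced by the completion rules. The rank claim is then immediate from the Fibonacci identity $f_{n+1} = 2f_{n-1} + f_{n-2}$, which also certifies that the three diagonal blocks have compatible sizes; and Theorem~\ref{chain-complex-prop} (or a one-line block induction $d_{n-1}d_n = \mathrm{diag}(d_{n-3}d_{n-2}, d_{n-4}d_{n-3}, d_{n-3}d_{n-2}) = 0$) shows that $P_*$ is a complex.

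The crucial point is that the decomposition $P_n = R^{f_{n-1}} \oplus R^{f_{n-2}} \oplus R^{f_{n-1}}$ block-diagonalizing $d_n$ as its source simultaneously block-diagonalizes $d_{n+1}$ as its target. Consequently, for $n \geq 4$ both $\ker d_n$ and $\operatorname{im} d_{n+1}$ split along these three summands, and the three diagonal blocks of the pair $(d_{n+1}, d_n)$ are exactly the pairs $(d_{n-1}, d_{n-2})$, $(d_{n-2}, d_{n-3})$, $(d_{n-1}, d_{n-2})$. This yields the homology recursion
\[
H_n(P_*) \cong H_{n-2}(P_*) \oplus H_{n-3}(P_*) \oplus H_{n-2}(P_*), \qquad n \geq 4,
\]
so that the vanishing of homology in all degrees $\geq 1$ follows by induction from the three base cases $H_1 = H_2 = H_3 = 0$.

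These base cases are where the arithmetic of $R$ enters, and I expect them to be the main obstacle. The relations $x_1x_2 = x_1x_3 = 0$ give $\operatorname{Ann}_R(x_1) = (x_2,x_3)$ and make multiplication by $x_2$ or $x_3$ factor through $R/(x_1) = k[x_2,x_3]$, a domain in which the only syzygy of $(x_2,x_3)$ is the Koszul relation $(x_3,-x_2)$. Using these: $H_1 = \operatorname{Ann}_R(x_1)/\operatorname{im} d_2 = (x_2,x_3)/(x_2,x_3) = 0$; for $H_2$ one reduces a kernel element $(a,b)$ of $d_2$ modulo $(x_1)$, writes $(\bar a, \bar b) = \bar t\,(x_3,-x_2)$, and checks that $(a,b) = d_3(a', t, b')$ holds after absorbing the remaining multiples of $x_1$; and for $H_3$ one shows directly that $\ker d_3 = (x_2,x_3) \oplus (x_1) \oplus (x_2,x_3) = \operatorname{im} d_4$, the key step being that an element $v \in (x_1)$ satisfies $x_2 v = x_3 v = 0$, which collapses both defining equations of $\ker d_3$.

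Finally, the augmentation is immediate: $\operatorname{im} d_1 = (x_1)$ is exactly the kernel of the projection $d_0\colon R \to R/(x_1)$, so $P_*$ is exact at $P_0$ with $H_0(P_*) = R/(x_1)$; combined with the vanishing of all higher homology this exhibits $P_* \to R/(x_1) \to 0$ as a free resolution. The genuinely delicate part of the whole argument is justifying rigorously that the procedure's local completion rules do produce the global block-recursive differentials asserted above --- the ``repeating pattern'' of Example~\ref{ejemplo-2-cubo} --- since once that pattern is secured, both the complex property and the exactness are formal.
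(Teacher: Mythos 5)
Your proposal is correct and takes essentially the same route as the paper: an induction on homological degree exploiting the block recursion $d_n = \mathrm{diag}(d_{n-2}, d_{n-3}, d_{n-2})$, so that $\kker{d_n}$ and $\im{d_{n+1}}$ split compatibly into three blocks and exactness propagates from the base cases $n=1,2,3$ (your homology recursion $H_n \cong H_{n-2}\oplus H_{n-3}\oplus H_{n-2}$ is just a repackaging of the paper's kernel/image bookkeeping). If anything, you supply more detail than the paper at the base cases $n=2,3$, which it leaves as ``direct computation'', and your reading $\im{d_2} = (x_2,x_3)$ is the right one: the matrix $d_2 = \begin{bmatrix} x_1 & x_2 \end{bmatrix}$ in the theorem statement is a misprint for $\begin{bmatrix} x_2 & x_3 \end{bmatrix}$, as both the diagram of Example \ref{ejemplo-2-cubo} and the paper's own proof of exactness at degree $1$ confirm.
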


\begin{proof}
Begin by noting that if $r \in R$, then $r = x_1p(x_1) + A(x_2,x_3)$, with $A(x_2,x_3) \in k[x_2,x_3]$ and in particular $A(0,0)=r_0$, the constant term of $r$. We proceed by induction over $n$. Note that we already know that this is a chain complex, thus $\im{d_2} \subset \kker{d_1}$. For the other contention, note that:
\[
\kker{d_1} = \{ r \in R : d_1(r)=0 \}  = \{ r \in R : x_1^2p(x_1) + x_1r_0 = 0 \}. 
\]
Thus $r_0 = 0$ and therefore any $r \in \kker{d_1}$ is of the form $r=x_2r_1 + x_3r_2$ with $r_1, r_2 \in R$. That is $\kker{d_1} = \langle x_2,x_3 \rangle = \im{d_2}$. 
The checking of the equations $\kker{d_n} = \im{d_{n+1}}$ for $n=2,3$ are done similarly, i.e. direct computation, and therefore we focus on the proof of exactness when $n\geq 4$. Given $r=(r_1,r_2,\ldots,r_n) \in R^{(n)}$, when $i \leq j$, then we define 
\[
r_{[i,j]}=(r_i,r_{i+1},\ldots,r_j).
\]
From the definition of $d_n$, we see that:
\begin{align*}
d_n(r_n) =& \left( d_{n-2}(r_{[1,f_{n-1}]}), d_{n-3}(r_{[f_{n-1}+1,f_{n-1}+f_{n-2}]}),  \right. \\
 & \left. \quad d_{n-2}(r_{[f_{n-1}+f_{n-2}+1,f_{n-1}+f_{n-2}+f_{n-1}]}) \right) \\
 =& \left( d_{n-2}(r_{[1,f_{n-1}]}),d_{n-3}(r_{[f_{n-1}+1,f_{n}]}), d_{n-2}(r_{[f_{n}+1,f_{n+1}]}) \right).
\end{align*}
Hence, we note that if $d_n(r_n) = 0$ then
\[
\left( d_{n-2}(r_{[1,f_{n-1}]}),d_{n-3}(r_{[f_{n-1}+1,f_{n}]}), d_{n-2}(r_{[f_{n}+1,f_{n+1}]}) \right) = 0
\]
which implies that
\[
d_{n-2}(r_{[1,f_{n-1}]})=0, \quad d_{n-3}(r_{[f_{n-1}+1,f_{n}]})=0 \quad \text{and} \quad d_{n-2}(r_{[f_{n}+1,f_{n+1}]})  = 0
\]
and so
\[
r_{[1,f_{n-1}]} \in \kker{d_{n-2}}, \quad r_{[f_{n-1}+1,f_{n}]} \in \kker{d_{n-3}} \quad \text{and} \quad r_{[f_{n}+1,f_{n+1}]} \in \kker{d_{n-2}}
\]
By induction, we have that $\kker{d_i} = \im{d_{i+1}}$, for all $1 \leq i \leq n-1$. Hence, we can describe $\kker{d_n}$ as follows:
\begin{align*}
\kker{d_n} = & \left\{ r \in P_n : r_{[1,f_{n-1}]} \in \im{d_{n-1}}, \quad r_{[f_{n-1}+1,f_{n}]} \in \im{d_{n-2}} \right. \\ 
 & \left. \quad \text{and} \quad r_{[f_{n}+1,f_{n+1}]} \in \im{d_{n-1}} \right\} 
\end{align*}
This last description of $\kker{d_n}$ is precisely $\im{d_{n+1}}$.
\end{proof}
Note that if we denote by $\rho_n$ the rank of the matrix associated to $d_n$, then we see that for all $n \geq 4$
\[
\rho_n = \rho_{n-3} + 2 \rho_{n-2}, 
\]
which is another form of presenting the relation between the Fibonacci numbers and explaining why they show up in the ranks of the free modules of the complex $P_*$.

\subsection{$R=k[x_1,x_2]/(x_1^2,x_1x_2,x_2^2)$} \label{modulo-todos-cuadraticos} Just as before, in Example \ref{anillo-modulo-todos-cuadraticos} we obtain a chain complex from the multiplication by $x_1$ map. As before, Theorem \ref{chain-complex-prop} says that after the Diagram of free modules procedure, the result is a chain complex. We show next that is in fact a free resolution of $R/(x_1)$. 

\begin{theorem}
Let $R=k[x_1,x_2]/(x_1^2,x_1x_2,x_2^2)$. The chain complex $V_*$ (described at the end of Example \ref{anillo-modulo-todos-cuadraticos}) given at degree $n$ by $V_n = \bigoplus_{i=1}^{2^{n+1}} R$, and the differential $d_n : V_{n} \to V_{n-1}$, for $n\geq 2$,  given by the following block matrix:
\[
d_n = 
\begin{bmatrix}
d_{n-1} & 0 \\
0 & d_{n-1} \\
\end{bmatrix}
\]
with 
\[
d_1 = \begin{bmatrix}
x_1 & x_2 \\
\end{bmatrix}
\]
is a free resolution of the $R$-module $R/(x_1)$. That is $V_* \xrightarrow{d_0} R/(x_1) \to 0 $ is an exact sequence, with $d_0$ the usual projection homomorphism.
\end{theorem}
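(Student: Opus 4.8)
The plan is to prove exactness by verifying $\kker{d_n}=\im{d_{n+1}}$ in every degree by induction on $n$, following the template of the proof of Theorem~\ref{teo-2-cubo} but now exploiting that the recursion $d_n=\left(\begin{smallmatrix} d_{n-1} & 0\\ 0 & d_{n-1}\end{smallmatrix}\right)$ is block-\emph{diagonal}. The single ring-theoretic fact that powers every computation is that all products of variables vanish in $R$: writing $r=r_0+r_1x_1+r_2x_2$ with $r_i\in k$, one has $x_1r=r_0x_1$ and $x_2r=r_0x_2$. Hence $x_iR=kx_i$, the maximal ideal satisfies $\mm=(x_1,x_2)=kx_1\oplus kx_2$, and $\operatorname{ann}_R(x_i)=\mm$. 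These identities turn each low-degree kernel and image into a finite $k$-linear computation, exactly as the expansion $r=x_1p(x_1)+A(x_2,x_3)$ does in the proof of Theorem~\ref{teo-2-cubo}.

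Next I would settle the base cases directly, as is done for the first few degrees there. At the augmentation one has $\kker{d_0}=(x_1)=x_1R$, which is exactly the image of the initial multiplication-by-$x_1$ map, so exactness holds at the bottom and pins the resolved module down to $R/(x_1)$. One degree up, the kernel of multiplication by $x_1$ is $\operatorname{ann}_R(x_1)=\mm$, while the differential $\begin{pmatrix} x_1 & x_2 \end{pmatrix}$ has image $x_1R+x_2R=\mm$; the two agree. A final hand computation gives that $\begin{pmatrix} x_1 & x_2\end{pmatrix}$ has kernel $\mm\oplus\mm$, and this matches the image $\mm\oplus\mm$ of the first genuinely block-diagonal differential $\begin{pmatrix} x_1 & x_2 & 0 & 0\\ 0 & 0 & x_1 & x_2\end{pmatrix}$. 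This seeds the induction.

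For the inductive step I would assume $\kker{d_{n-1}}=\im{d_n}$. Because $d_n=\left(\begin{smallmatrix} d_{n-1} & 0\\ 0 & d_{n-1}\end{smallmatrix}\right)$, splitting $V_n=V_{n-1}\oplus V_{n-1}$ gives $\kker{d_n}=\kker{d_{n-1}}\oplus\kker{d_{n-1}}$; likewise $d_{n+1}=\left(\begin{smallmatrix} d_{n} & 0\\ 0 & d_{n}\end{smallmatrix}\right)$ gives $\im{d_{n+1}}=\im{d_n}\oplus\im{d_n}$. The inductive hypothesis then yields $\kker{d_n}=\im{d_n}\oplus\im{d_n}=\im{d_{n+1}}$ in one line. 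Since every $V_n$ is finitely generated and free, and the sequence is exact in each positive degree and augments onto $R/(x_1)$, this exhibits $V_*$ as a free resolution of $R/(x_1)$.

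I expect the difficulty here to be bookkeeping rather than conceptual. The uniform block recursion only takes hold above two irregular initial differentials, multiplication by $x_1$ and $\begin{pmatrix} x_1 & x_2\end{pmatrix}$, so the delicate point is to align those base cases, together with the splice producing $R/(x_1)$ at the bottom, with the correct starting index of the induction. Once that alignment is fixed, the block-diagonal shape makes the inductive step essentially formal: both kernel and image split as direct sums of two copies and the hypothesis transports through each summand. This is in marked contrast with the Fibonacci complex $P_*$, whose three interleaved blocks forced the more elaborate kernel description of Theorem~\ref{teo-2-cubo}; the self-similar doubling here is precisely why the argument collapses to the single identity above.
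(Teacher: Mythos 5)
Your proposal is correct and takes essentially the same approach as the paper's proof: settle the irregular bottom degrees by hand using the expansion $r = a + bx_1 + cx_2$, then use the block-diagonal shape of the differentials to split kernels and images as direct sums, so that the inductive hypothesis $\kker{d_{n-1}} = \im{d_n}$ gives $\kker{d_n} = \kker{d_{n-1}} \oplus \kker{d_{n-1}} = \im{d_n} \oplus \im{d_n} = \im{d_{n+1}}$ in one line. If anything, you are more careful than the paper about aligning the base cases (the augmentation, the multiplication-by-$x_1$ map, and the stage where $\begin{bmatrix} x_1 & x_2 \end{bmatrix}$ is the outgoing map), which the paper's proof, partly because of an indexing slip in its own statement, verifies only at the $\begin{bmatrix} x_1 & x_2 \end{bmatrix}$ stage before running the same induction.
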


\begin{proof}
The proof is similar to the one of Theorem \ref{teo-2-cubo}. First note that if $r \in R$, then $r = a + bx_1 + cx_2$ and proceed by induction on $n$ to show exactness at each degree. If $d_1(r_1,r_2)=0$, then $x_1r_1 + x_2r_2 = 0$, which implies that $a_1x_1 + a_2x_2=0$. Hence 
\[
(r_1,r_2) = (b_1x_1 + c_1x_2,b_2x_1 + c_2x_2) = d_2(b_1,c_1,b_2,c_2),
\]
Showing that $\kker{d_1} \subseteq \im{d_2}$. Thus it is exact. 

Next, suppose $r = (r_1,r_2,\ldots,r_{2^{n+1}}) \in V_n$ and that $d_n(r)=0$. Hence, using the same notation as in the proof of Theorem \ref{teo-2-cubo}, we see that:
\[
d_n(r) = (d_{n-1}(r_{[1,2^n]}) , d_{n-1}(r_{[2^n+1,2^{n+1}]})).
\]
Therefore if $r \in \kker{d_n}$, then $r_{[1,2^n]} \in \kker{d_{n-1}}$ and $r_{[2^n+1,2^{n+1}]} \in \kker{d_{n-1}}$. By induction, we have that $\kker{d_{n-1}} = \im{d_n}$. Therefore:
\[
\kker{d_n} = \kker{d_{n-1}} \oplus \kker{d_{n-1}} = \im{d_n} \oplus \im{d_n} = \im{d_{n+1}}.
\]
\end{proof}

In this case, it is clear that the rank of $V_n$ doubles the rank of $V_{n-1}$, because of the construction and description of the differential.

\subsection{$R=k[x_1,\ldots,x_n]/(x_1^2,x_ix_j)_{i \neq j}$} \label{teorema-ejemplo-O} Just like the previous two theorems, the main result of this section also deals with showing the exactness of $O(n)_*$, the resulting chain complex, and showing that is again a free resolution of $R/(x_1)$. However unlike the previous cases, the following result doesn't state what is the respective rank at each degree nor gives an explicit description of the corresponding differential. This is due to the fact that this examples works for a family of rings, depending on some integer $n\geq 2$.

A key ingredient is this proof is the fact  that there are no mixed terms missing in the ideal. Before going into the statement and it's proof, we make a few general observations of this last fact that will be of use in the proof of Theorem \ref{Teo-complejo-O} below.

Let $R=k[x_1,\ldots,x_n]/I$, where $I$ is an ideal of $k[x_1,\ldots,x_n]$ generated by products of the form $x_ix_j$. Consider the multiplication by $x_i$ map $R \xrightarrow{x_i} R$, given by $r \mapsto x_ir$ and denote it by the matrix $\begin{bmatrix} x_i  \end{bmatrix}$. Then $\im{\begin{bmatrix} x_i  \end{bmatrix}} = (x_i)$, the ideal in $R$ generated by $x_i$. Following the matrix notation, we also let $\begin{bmatrix} x_i & x_j  \end{bmatrix}$ denote the map $R \oplus R \xrightarrow{}  R$ given by $(r,s) \mapsto x_ir + x_js$. 

\begin{lemma}\label{inter-kernel}
If $x_ix_j \in I$, then 
\[
\kker{\begin{bmatrix} x_i & x_j  \end{bmatrix}} = \kker{\begin{bmatrix} x_i  \end{bmatrix}} \oplus \kker{\begin{bmatrix} x_j  \end{bmatrix}}
\]
\end{lemma}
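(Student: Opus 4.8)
The plan is to prove the two inclusions separately, with all the substance concentrated in one of them. The inclusion $\kker{\begin{bmatrix} x_i \end{bmatrix}} \oplus \kker{\begin{bmatrix} x_j \end{bmatrix}} \subseteq \kker{\begin{bmatrix} x_i & x_j \end{bmatrix}}$ is immediate, since $x_ir=0$ and $x_js=0$ force $x_ir+x_js=0$. For the reverse inclusion one might first try to multiply the relation $x_ir+x_js=0$ by $x_i$; but using $x_ix_j\in I$ this only yields $x_i^2r=0$, which is too weak to conclude. Instead, the right reduction is to rewrite the relation as $x_ir=-x_js$, so that the single element $x_ir$ lies in the intersection of ideals $(x_i)\cap(x_j)$ inside $R$. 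It therefore suffices to prove that $(x_i)\cap(x_j)=0$ whenever $x_ix_j\in I$ and $i\neq j$; granting this, $x_ir=0$, and then $x_js=-x_ir=0$, so $(r,s)$ lies in the right-hand side.

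To establish $(x_i)\cap(x_j)=0$, I would exploit the monomial structure of $R$. Since $I$ is generated by quadratic monomials it is a monomial ideal, so the residue classes of the monomials of $k[x_1,\ldots,x_n]$ that do \emph{not} lie in $I$ form a $k$-basis of $R$; call this set of standard monomials $B$. With respect to $B$, the ideal $(x_i)=x_iR$ is spanned by those elements of $B$ divisible by $x_i$, and likewise $(x_j)$ is spanned by those elements of $B$ divisible by $x_j$. Because these two spanning sets consist of distinct basis monomials, the intersection $(x_i)\cap(x_j)$ is spanned precisely by the monomials of $B$ divisible by \emph{both} $x_i$ and $x_j$. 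Any such monomial, using $i\neq j$, is divisible by $x_ix_j$, which lies in $I$ by hypothesis; hence it lies in $I$ and is excluded from $B$. Thus no basis monomial survives in the intersection, giving $(x_i)\cap(x_j)=0$, and the reverse inclusion follows.

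The genuinely delicate point, and the one I would flag as the main obstacle, is the justification that the standard monomials form a $k$-basis of $R$ and that intersecting the two coordinate spans reduces to the common basis monomials: this is the standard combinatorial description of a quotient by a monomial ideal, and it is precisely the step that uses that $I$ is monomial rather than an arbitrary ideal. I would also record explicitly that the argument requires $i\neq j$: when $i=j$ the claimed identity fails, since $(1,-1)\in\kker{\begin{bmatrix} x_i & x_i \end{bmatrix}}$ but $(1,-1)\notin\kker{\begin{bmatrix} x_i \end{bmatrix}}\oplus\kker{\begin{bmatrix} x_i \end{bmatrix}}$ as soon as $x_i\neq 0$ in $R$. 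This restriction is harmless for our purposes, because in the complexes produced by the procedure the case $i=j$ never arises at such a vertex: the \emph{No repetitions} rule merges any two equally labelled arrows into a common vertex, so the map $\begin{bmatrix} x_i & x_j \end{bmatrix}$ always has $i\neq j$.
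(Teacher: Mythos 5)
Your proof is correct and takes essentially the same route as the paper's: both reduce the lemma to the fact that $(x_i)\cap(x_j)=\{0\}$ whenever $x_ix_j\in I$ (equivalently, $\im{\begin{bmatrix} x_i \end{bmatrix}}\cap\im{\begin{bmatrix} x_j \end{bmatrix}}=\{0\}$), and then observe that a relation $x_ir+x_js=0$ places $x_ir=-x_js$ in that intersection, forcing both terms to vanish. The only differences are ones of detail in your favor: the paper simply asserts $(x_i)\cap(x_j)=\{0\}$, whereas you justify it via the standard-monomial $k$-basis of $R$, and you explicitly flag the hypothesis $i\neq j$ (with the counterexample $(1,-1)\in\kker{\begin{bmatrix} x_i & x_i \end{bmatrix}}$ showing the identity fails otherwise), a restriction the paper leaves implicit but which is indeed satisfied in every application.
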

\begin{proof}
This is due to the fact that in the case that $x_ix_j \in I$, then $(x_i) \cap (x_j) = \{ 0 \}$, which means that  
$
\im{\begin{bmatrix} x_i  \end{bmatrix}} \cap \im{\begin{bmatrix} x_j  \end{bmatrix}} = \{ 0 \}
$. It's not hard to see that if given maps $g,f:R \to R$ such that   $\im{f} \cap \im{g} = \{ 0 \}$, then $\kker{f \oplus g} = \kker{ f } \oplus \kker{ g }$, which completes the result.
\end{proof}

Now we state the result of this section as follows.

\begin{theorem} \label{Teo-complejo-O}
Let $R=k[x_1,\ldots,x_n]/(x_1^2,x_ix_j)_{i \neq j}$. The chain complex $O(n)_*$ (described at the end of Example \ref{ejemplo-O}) 
is a free resolution of the $R$-module $R/(x_1)$. That is $O(n)_* \xrightarrow{d_0} R/(x_1) \to 0 $ is an exact sequence, with $d_0$ the usual projection homomorphism.
\end{theorem}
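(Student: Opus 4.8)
The plan is to exploit the purely combinatorial \emph{tree} shape of $O(n)_*$, which is forced by the absence of any missing mixed term. Since every $x_ix_j$ with $i\neq j$ lies in $I$, Step 1 produces no diamond diagram, so every completion in Steps 3--4 is a column completion and the ``No repetition'' rule never triggers (fresh top vertices are created per column and are never shared). Consequently the marked diagram underlying $O(n)_*$ is a rooted tree $T$: a root at degree $0$, its single child (the initial $x_1$ edge) at degree $1$, and each vertex $v$ carrying exactly one outgoing edge with a label $\ell(v)\in\{1,\dots,n\}$ toward its parent $p(v)$. I would first record, straight from the column completion rule, that the children of a vertex $v$ with $\ell(v)=i$ are in bijection with $\{\,j : x_jx_i\in I\,\}$, which equals $\{1,\dots,n\}$ when $i=1$ and $\{1,\dots,n\}\setminus\{i\}$ when $i\geq 2$; in particular the labels on the edges into any fixed vertex are pairwise distinct. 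Under the Diagram of Free Modules translation, $O(n)_n=\bigoplus_{v}Re_v$ over the degree-$n$ vertices, with $d_n(e_v)=x_{\ell(v)}e_{p(v)}$.

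Next I would assemble two algebraic inputs. Using the monomial $k$-basis $\{1,x_1\}\cup\{x_i^m : i\geq 2,\ m\geq 1\}$ of $R$, one checks that $\im{\begin{bmatrix} x_i \end{bmatrix}}=(x_i)$ and that the ideals $(x_1),\dots,(x_n)$ are $k$-linearly independent, so their sum is direct. This upgrades Lemma \ref{inter-kernel} to a multi-index version: for pairwise distinct indices $j_1,\dots,j_k$,
\[
\kker{\begin{bmatrix} x_{j_1} & \cdots & x_{j_k} \end{bmatrix}}=\bigoplus_{s=1}^{k}\kker{\begin{bmatrix} x_{j_s} \end{bmatrix}},
\]
since in any vanishing sum $\sum_s x_{j_s}r_s=0$ the terms lie in independent ideals and hence each vanishes. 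The second input is the direct computation of each single kernel: $\kker{\begin{bmatrix} x_i \end{bmatrix}}=\sum_{\{\,j:x_jx_i\in I\,\}}(x_j)$, that is, the maximal ideal $\mathfrak m=(x_1,\dots,x_n)$ when $i=1$ and $\sum_{j\neq i}(x_j)$ when $i\geq 2$. The crucial observation is that the index set in this kernel formula is exactly the set of children-labels of a vertex with outgoing label $i$.

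With these in hand, exactness at degree $n\geq 1$ follows by a direct coordinatewise computation rather than by induction. Grouping $d_n$ by target, an element $\sum_v r_ve_v$ lies in $\kker{d_n}$ iff for every degree-$(n-1)$ vertex $w$ one has $\sum_{p(v)=w}x_{\ell(v)}r_v=0$; since the siblings over $w$ carry distinct labels, the multi-index lemma yields $\kker{d_n}=\bigoplus_{v}\kker{\begin{bmatrix} x_{\ell(v)} \end{bmatrix}}e_v$. On the other hand $\im{d_{n+1}}=\bigoplus_{v}\bigl(\sum_{p(u)=v}(x_{\ell(u)})\bigr)e_v$, and the children-label description identifies $\sum_{p(u)=v}(x_{\ell(u)})=\sum_{\{\,j:x_jx_{\ell(v)}\in I\,\}}(x_j)$, which is precisely $\kker{\begin{bmatrix} x_{\ell(v)} \end{bmatrix}}$ by the kernel formula. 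Hence $\kker{d_n}=\im{d_{n+1}}$ in each coordinate. For the tail, $d_1=\begin{bmatrix} x_1 \end{bmatrix}$ has image $(x_1)$, which is the kernel of the projection $d_0\colon R\to R/(x_1)$; this gives exactness at $O(n)_0$ and identifies the cokernel of the initial map as $R/(x_1)$.

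The step I expect to be the main obstacle is the careful passage between the combinatorial and algebraic descriptions: proving rigorously that the completion process produces exactly the labeled tree claimed (so that sibling labels are distinct and coincide with $\{\,j:x_jx_{\ell(v)}\in I\,\}$), and establishing the full independence of $(x_1),\dots,(x_n)$ needed for the multi-index kernel splitting. Once these are secured, the matching of $\kker{d_n}$ with $\im{d_{n+1}}$ is a formal coordinatewise comparison.
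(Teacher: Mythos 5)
Your proposal is correct and takes essentially the same approach as the paper: both arguments rest on computing $\kker{\begin{bmatrix} x_i \end{bmatrix}} = \sum_{j\,:\,x_jx_i\in I}(x_j)$ and on splitting kernels of multi-variable multiplication maps into direct sums of single-variable kernels (the paper's Lemma \ref{inter-kernel}, which you extend to several indices), then matching these kernels blockwise against the image of the next differential. Your explicit tree indexing and coordinatewise bookkeeping simply make rigorous the recursion that the paper phrases informally.
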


\begin{proof}
Denote the ideal $(x_1^2,x_ix_j)_{i \neq j}$ by $I$ and proceed by observing the kernels of the maps $R \xrightarrow{x_i} R$. 
First consider the kernel of $\begin{bmatrix} x_1  \end{bmatrix}$. Given that all $x_1x_j \in I$, for all $j=1,\ldots,n$, we have that:
\[
\kker{\begin{bmatrix} x_1  \end{bmatrix}} = ( x_1, x_2 , \ldots , x_n),
\]
the ideal generated by $x_1,x_2,\ldots,x_n$. This ideal can be realized as the image of the map 
\[
\delta_1 =\begin{bmatrix} x_1 & x_2 & \cdots & x_n \end{bmatrix}: R^{(n)} \to R.
\]
This way we get exactness in the middle of the following diagram:
\begin{equation}\label{paso-1-induccion}
\bigoplus_{i =1}^{n} R \xrightarrow{\delta_1} R \xrightarrow{x_1} R.
\end{equation}
Also note that $( x_1, x_2 , \ldots , x_n) = (x_1) \oplus (x_2) \oplus \cdots \oplus (x_n) \subset R$, which also means that $\delta_1 = \begin{bmatrix} x_1  \end{bmatrix} \oplus \begin{bmatrix} x_2  \end{bmatrix} \oplus \cdots \oplus \begin{bmatrix} x_n  \end{bmatrix}$.

Next, we look at the kernel of the map multiplication by $x_i$, with $i = 2, \ldots, n$. Here we have that:
\[
\kker{\begin{bmatrix} x_i  \end{bmatrix}} = (x_1,x_2,\ldots,x_{i-1},x_{i+1},\ldots,x_{n}).
\]
Again, due to the description of the generators of the quotient ideal, but in this case $x_i$ is not listed as a generator. Just like before this last ideal can be realized as the image of the map
\[ \delta_i =
\begin{bmatrix}
x_1 & \cdots & x_{i-1} & x_{i+1} & \cdots x_n
\end{bmatrix} : R^{(n-1)} \to R.
\]
giving the exactness in the middle of the following diagram:
\[
\bigoplus_{i =1}^{n} R \xrightarrow{\delta_i} R \xrightarrow{x_i} R.
\]
Also, we have that
\[
(x_1,x_2,\ldots,x_{i-1},x_{i+1},\ldots,x_{n}) = (x_1) \oplus \cdots \oplus (x_{i-1}) \oplus (x_{i+1}) \oplus \ldots \oplus (x_{n}).
\]
With all this at hand, we continue the proof  recursively  with the initial differential already done in Equation \eqref{paso-1-induccion} after observing that $d_1 = \begin{bmatrix} x_1  \end{bmatrix}$ and that $d_2 = \delta_1$. Next, we observe that by Lemma \ref{inter-kernel} we have that
\[
\kker{\delta_2} = \kker{\begin{bmatrix} x_1  \end{bmatrix}} \oplus \cdots \oplus \kker{\begin{bmatrix} x_{i}  \end{bmatrix}} \oplus \cdots \oplus \kker{\begin{bmatrix} x_n  \end{bmatrix}}.
\]
Each of these kernels are the image of some map $\delta_i$. Thus the direct sum of all these $\delta_i$ maps gives rise to $d_3$. Therefore the map $d_3$ is again a sum of maps of the form $\begin{bmatrix} x_i  \end{bmatrix}$.

In general, the kernel of each $d_n$ is a direct sum of kernels of maps of the form $\begin{bmatrix} x_i \end{bmatrix}$. Hence an application of Lemma \ref{inter-kernel}, makes the kernel commute with the direct sum of maps, thus obtaining a direct sum of kernels of map of the form $\begin{bmatrix} x_i \end{bmatrix}$. 
\end{proof}

%%%
%%%
%%%

\section{Injective dimensions}

We apply the exact chain complexes from the previous examples obtained from the Chain Complex Construction Procedure and use them to compute the injective dimension of each of the three rings presented, as modules over itself.

\begin{theorem}
The injective dimension of the following rings:
\begin{itemize}
\item $R=k[x_1,x_2,x_3]/(x_1x_2,x_1x_3)$,
\item $R=k[x_1,x_2]/(x_1^2,x_1x_2,x_3^2)$,
\item $R=k[x_1,\ldots,x_n]/(x_1^2,x_ix_j)_{i \neq j}$,
\end{itemize}
 as modules over itself is infinite.
\end{theorem}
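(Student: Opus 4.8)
The plan is to reduce the three statements to the non‑vanishing of suitable $\Ext$‑groups, and then to extract that non‑vanishing from two structural features of the resolutions $P_*$, $V_*$ and $O(n)_*$: they are \emph{minimal} and they are \emph{infinite}. Recall that over a Noetherian ring the injective dimension $\mathrm{id}_R R$ satisfies
\[
\mathrm{id}_R R = \sup\{\, i : \Ext^i_R(N,R) \neq 0 \text{ for some finitely generated } N \,\},
\]
so it suffices, in each case, to produce a single finitely generated module $N$ with $\Ext^i_R(N,R)\neq 0$ for infinitely many $i$. I take $N = R/(x_1)$, whose free resolution has been identified with $P_*$, $V_*$ and $O(n)_*$ in Theorems \ref{teo-2-cubo}, and the two results of Sections \ref{modulo-todos-cuadraticos} and \ref{teorema-ejemplo-O}. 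The first feature I would record is that these resolutions are minimal: every entry of every differential is $\pm x_s$, hence lies in the homogeneous maximal ideal $\mm = (x_1,\dots,x_n)$; using the standard grading (or, since the first and third rings are not local, localizing at $\mm$, which can only decrease injective dimension, so that infinitude for $R_\mm$ yields infinitude for $R$) this is exactly minimality. The second feature is that all three resolutions are infinite, with $i$-th Betti number $\beta_i\geq 1$ for every $i\geq 0$ (the ranks being $f_{i+1}$, $2^{i+1}$, and the strictly positive ranks appearing in $O(n)_*$).

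Next I would apply $\Hom_R(-,R)$. Since each $P_i$ is finitely generated free, $\Hom_R(P_i,R)\cong P_i$ and the dual cochain complex $C^\bullet=\Hom_R(P_*,R)$ has the transposed differentials, whose entries again lie in $\mm$. Therefore $C^\bullet\otimes_R k$, with $k=R/\mm$, has all its differentials equal to zero, and
\[
H^i\bigl(C^\bullet\otimes_R k\bigr)\cong k^{\beta_i}\neq 0 \qquad\text{for every } i\geq 0,
\]
whereas by construction $H^i(C^\bullet)=\Ext^i_R(R/(x_1),R)$.

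To compare the two I would invoke the universal-coefficient (hyper-$\Tor$) spectral sequence attached to the complex of free modules $C^\bullet$, namely $E_2^{p,q}=\Tor_p^R\!\bigl(H^q(C^\bullet),k\bigr)\Rightarrow H^{q-p}(C^\bullet\otimes_R k)$, whose only contributions to $H^i(C^\bullet\otimes_R k)$ come from $H^q(C^\bullet)=\Ext^q_R(R/(x_1),R)$ with $q\geq i$. Consequently, if $\Ext^i_R(R/(x_1),R)$ vanished for all $i>d$, then for every $i>d$ each entry contributing to $H^i(C^\bullet\otimes_R k)$ would involve some $\Ext^q$ with $q\geq i>d$ and would vanish, forcing $H^i(C^\bullet\otimes_R k)=0$; this contradicts $H^i(C^\bullet\otimes_R k)\cong k^{\beta_i}\neq 0$. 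Hence $\Ext^i_R(R/(x_1),R)\neq 0$ for infinitely many $i$, and $\mathrm{id}_R R=\infty$ in all three cases.

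The main obstacle is precisely the step from the module to the ring: knowing only that $\pd{R/(x_1)}=\infty$ does not suffice, since $k[x]/(x^2)$ is self-injective while its residue field has infinite projective dimension, so the argument must genuinely use that the \emph{dualized} complex $C^\bullet$ remains non-exact in arbitrarily high degrees, and this is exactly what minimality detects through the spectral sequence. Two bookkeeping points deserve care: fixing the framework (graded minimality, or localization at $\mm$) because the first and third rings are not local, and confirming $\beta_i\geq 1$ throughout the family $O(n)_*$. As an independent check one may note that Bass's theorem yields the same conclusion: a local ring of finite self-injective dimension is Gorenstein, hence Cohen--Macaulay, whereas each of the three rings localized at $\mm$ fails to be Cohen--Macaulay (the first because it is not equidimensional, the other two because $\mm$ is an associated prime, being the annihilator of $x_1$).
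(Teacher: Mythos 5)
Your opening reduction --- taking $N=R/(x_1)$ and aiming at $\Ext^i_R(R/(x_1),R)\neq 0$ for infinitely many $i$ --- is exactly the paper's starting point, and your two structural observations are correct: the differentials of $P_*$, $V_*$, $O(n)_*$ have all entries in $\mm$, so the dual complex $C^\bullet=\Hom_R(P_*,R)$ satisfies $H^i(C^\bullet\otimes_R k)\cong k^{\beta_i}\neq 0$ for all $i$. The gap is the comparison step. The hyper-$\Tor$ spectral sequence $E_2^{p,q}=\Tor_p^R\bigl(H^q(C^\bullet),k\bigr)\Rightarrow H^{q-p}(C^\bullet\otimes_R k)$ only converges under boundedness hypotheses that $C^\bullet$ violates: it is a cochain complex of frees unbounded in cohomological degree (as a chain complex it is unbounded below), the anti-diagonals of the underlying double complex are infinite, and convergence genuinely fails. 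Worse, the statement you want the spectral sequence to deliver --- that a minimal infinite free resolution forces $\Ext^i_R(M,R)\neq 0$ for infinitely many $i$ --- is simply false, and the very example you mention refutes it: over $R=k[x]/(x^2)$ the minimal free resolution of $M=k$ is $\cdots\to R\xrightarrow{x}R\xrightarrow{x}R\to k\to 0$, infinite, with every $\beta_i=1$ and all entries in $\mm$; dualizing gives $0\to R\xrightarrow{x}R\xrightarrow{x}R\to\cdots$, so $H^i(C^\bullet\otimes_R k)\cong k\neq 0$ for every $i\geq 0$, and yet $\Ext^i_R(k,R)=0$ for all $i>0$ because this ring is self-injective. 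Run your argument verbatim on this example and it proves that $k[x]/(x^2)$ has infinite self-injective dimension, a contradiction; equivalently, in this example $E_2^{p,q}=0$ for all $q>0$ while the abutment is nonzero in every degree, so the spectral sequence cannot converge. Minimality alone cannot transfer non-exactness of $C^\bullet\otimes_R k$ back to non-exactness of $C^\bullet$; what is needed --- and what the paper does --- is an argument inside $C^\bullet$ itself. The paper locates, in infinitely many degrees of each dual complex, the explicit local configuration \eqref{vv-diagram} and checks by hand that $(x_s,0)$ is a cocycle (because $x_sx_i,\,x_sx_j\in I$) that is not a coboundary (the incoming map has image $\{(x_sr,x_tr):r\in R\}$, which never contains $(x_s,0)$). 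Some such degree-by-degree use of the specific form of the ideal is unavoidable.

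Your ``independent check'' via Bass's theorem is not a complete repair either. It works for the first ring (non-equidimensional, hence $R_\mm$ not Cohen--Macaulay) and for the third (depth $0$ but dimension $1$), granting that ``finite self-injective dimension implies Gorenstein'' without a CM hypothesis --- which is the Bass conjecture, settled by Peskine--Szpiro and Roberts, i.e.\ much heavier machinery than anything in the paper. But it breaks on the second ring: $k[x_1,x_2]/(x_1^2,x_1x_2,x_2^2)$ is Artinian, hence Cohen--Macaulay (depth $=\dim=0$), so the fact that $\mm$ is an associated prime is no obstruction in dimension zero. The correct statement there is that this ring is not Gorenstein because its socle $(x_1,x_2)$ is two-dimensional over $k$ --- a valid route to the conclusion, but a different argument from the one you wrote.
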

\begin{proof}
 For all of them we will show that for infinitely many $i>0$ we have that
\[
\Ext^i_R(R/(x_1),R) \neq 0.
\]
We apply $\Hom_R(-,R)$ to each of free resolutions $P_*$, $V_*$ and $O(n)_*$, denote each cochain complex as $P^*$, $V^*$ and $O(n)^*$. Note that in each degree we have that $\Hom_R(\bigoplus_1^{l} R,R) \cong \bigoplus_1^{l} R$ and the corresponding maps $d^*_i=\Hom_R(d_i,R)$ are give by just reversing the arrows. Next, we see that, except for the correct labeling of the arrows, the following configuration appears in infinite many degrees each of the cochain complexes:
\begin{equation} \label{vv-diagram}
\begin{tikzpicture}[yscale=1,baseline=(current  bounding  box.center)]
\node (a0) at (-0.5,5) {$R$};
\node (a0.5) at (0,5) {$\oplus$};
\node (a1) at (0.5,5) {$R$};
\node (b0) at (0,4) {$R$};
\node (b0.5) at (0.5,4) {$\oplus$};
\node (b1) at (1,4) {$R$};
\node (c0) at (0.5,3) {$R$};
\node (p) at (1,3) {.};
%
%
%\node (sb0) at (0+0.5,4) {$\oplus$};
%
%
%
\draw [<-] (a1) -- node[right]{\text{\tiny $x_j$}} (b0);
\draw [<-] (a0) -- node[left]{\text{\tiny $x_{i}$}} (b0);
\draw [<-] (b0) -- node[left]{\text{\tiny $x_s$}} (c0);
\draw [<-] (b1) -- node[right]{\text{\tiny $x_t$}} (c0);
\end{tikzpicture}
\end{equation}
Although, this is a portion of the full complex, we note that there are no other maps involved in the description of this local diagram of the cochain complex. Now, we see that that $(x_s,0)$ in the middle degree goes to $0$ after the map $[x_i \; x_j]$, because if these diagram appears it means that the monomials $x_sx_i, x_sx_j \in I$. However, the element $(x_s,0)$ is not the image of any $r$ from the module below. Indeed, the image of any element  $r \in R$ is $(x_sr,x_tr)$, which can never be $(x_s,0)$. Therefore, we have a nonzero element of the cohomology group. 
\end{proof}
We end by noting that in the proof of the previous result we only needed to observe the existence a local diagram as in \eqref{vv-diagram} in infinite many degrees to get that the injective dimension is infinite. Hence the natural question is:
\begin{question}
What relations must there be between the number of variables and the quadratic monomial generators for the figure \eqref{vv-diagram} appears after applying the Diagram Construction Procedure?
\end{question}

%
%\newpage
\bibliographystyle{alpha}

\begin{thebibliography}{Avr10}

\bibitem[Avr10]{Abramov-Infinite-free}
Luchezar~L. Avramov.
\newblock Infinite free resolutions [mr1648664].
\newblock In {\em Six lectures on commutative algebra}, Mod. Birkh\"{a}user Class., pages 1--118. Birkh\"{a}user Verlag, Basel, 2010.

\bibitem[Fr{\"{o}}75]{Froberg}
Ralph Fr{\"{o}}berg.
\newblock Determination of a class of {P}oincar\`{e} series.
\newblock {\em Math. Scand.}, 37(1):29--39, 1975.

\bibitem[MP15]{Peeva}
Jason McCullough and Irena Peeva.
\newblock Infinite graded free resolutions.
\newblock In {\em Commutative algebra and noncommutative algebraic geometry. {V}ol. {I}}, volume~67 of {\em Math. Sci. Res. Inst. Publ.}, pages 215--257. Cambridge Univ. Press, New York, 2015.

\bibitem[Pri70]{Priddy}
Stewart~B. Priddy.
\newblock Koszul resolutions.
\newblock {\em Trans. Amer. Math. Soc.}, 152:39--60, 1970.

\end{thebibliography}

%%%
%%%
%%%

\end{document}